\newtheorem{thm}{\rm\bf Theorem}[section]
\newtheorem{lem}[thm]{\rm\bf Lemma}
\newtheorem{cor}[thm]{\rm\bf Corollary}
\newtheorem{rem}[thm]{\rm\bf Remark}
\newtheorem{df}[thm]{\rm\bf Definition}
\newenvironment{proof}{\noindent{\bf Proof.}}{$\blacksquare$}
\begin{document}

\selectlanguage{english}

\noindent {\it UDK 517.929}

\begin{center}
{\bf \Large On exact controllability of neutral type time-delay systems}

\vskip4mm

\foreignlanguage{russian}{
{ \bf \Large К вопросу точной управляемости систем с запаздыванием нейтрального типа}
}
\end{center}

\noindent
R.~Rabah, IRCCyN, \'Ecole des Mines de Nantes, 4 rue Alfred Kastler, BP 20722 Nantes, France.
E-mail: rabah.rabah@mines-nantes.fr.

\vskip2mm
\noindent
G.~M.~Sklyar, Institute of Mathematics, University of Szczecin and Karazin Kharkiv University,
Wielkopolska 15, 70-451, Szczecin, Poland. E-mail:  sklar@univ.szczecin.pl.

\vskip2mm
\noindent
P.~Yu.~Barkhayev, Institute for Low Temperature Physics and Engineering,
Academy of Sciences of Ukraine and Karazin Kharkiv University, 47 Lenin Ave.,  61103 Kharkiv, Ukraine.
E-mail:   barkhayev@ilt.kharkov.ua.

\begin{abstract}
The paper is devoted to the problem of global exact controllability for a wide class of
neutral and mixed time-delay systems.
We consider an equivalent operator model in Hilbert space and
formulate steering conditions of controllable states as a vector moment problem.
The existence of a basis of eigenvectors of system operator allows to simplify substantially the form of
the moment problem. A change of control by a feedback law allows to modify the  system structure
to guarantee   the existence of
a basis of eigenvectors of the corresponding operator.
We prove a criterion of exact controllability and ascertain the precise critical time of controllability.
\end{abstract}

\begin{otherlanguage}{ukrainian}
\begin{abstract}
Робота присвячена вирішенню задачі глобальної точної керованості
для досить широкого класу систем з запізненням нейтрального та змішаного типів.
Розглядаючи еквівалентну операторну модель в гільбертовому просторі,
ми формулюємо умови керованості у вигляді деякої векторної проблеми моментів.
Вид даної проблеми моментів істотно спрощується при наявності базису простору з власних
векторів оператора системи з запізненням.
Заміна керування дозволяє перетворити структуру системи, і гарантувати існування базису з власних векторів
відповідного оператора.
Ми доводимо критерій точної керованості і встановлюємо точний час керування.
\end{abstract}
\end{otherlanguage}

\foreignlanguage{russian}{
\begin{abstract}
Данная работа посвящена решению задачи глобальной точной управляемости
для достаточно широкого класса систем с запаздыванием нейтрального и смешанного типов.
Рассматривая эквивалентную операторную модель в гильбертовом пространстве,
мы формулируем условия управляемости в виде некоторой векторной проблемы моментов.
Вид данной проблемы моментов существенно упрощается при наличии базиса пространства из собственных
векторов оператора системы с запаздыванием.
Замена управления позволяет преобразовать структуру системы, и гарантировать существование базиса из собственных векторов
соответствующего оператора.
Мы доказываем критерий точной управляемости и устанавливаем точное время управления.
\end{abstract}
}

\newpage

\section{Introduction}
The controllability problem for linear time delay systems has quite a long history
(see, e.g. \cite{Bensoussan_et_al_1992,Banks_Jacobs_Langenhop_1975,Gabasov-kirillova_1971,
Jacobs_Langenhop_1976,Manitius_Triggiani_1978,Marchenko_1979}
and references therein).
In this paper we consider the problem of global exact controllability for
a large class of neutral type systems given by the following equation:
\begin{equation}\label{ctr_eqn_1s}
\displaystyle
\dot{z}(t) = A_{-1}\dot{z}(t-1)+L z_t+Bu,\quad t\ge 0,
\end{equation}
where $A_{-1}\in\mathbb{R}^{n\times n}$, $B\in\mathbb{R}^{n\times r}$ are constant matrices,
$z_t: [-1, 0]\rightarrow \mathbb{C}^n$ is the history of $z$ defined by $z_t(s)=z(t+s)$,
the delay operator $L$ is given by
$$L f=\int^0_{-1}A_2(\theta)\frac{{\;{\rm d}}}{{\;{\rm d}} \theta}f(\theta)\,
{\;{\rm d}}\theta+\int^0_{-1}A_3(\theta)f(\theta)\,{\;{\rm d}}\theta,$$
and $A_2$, $A_3$ are $n\times n$-matrices whose elements belong to $L_2([-1,0],\mathbb{C})$.

The representation of delay systems as systems in some functional space proved to be one of the most productive approaches.
Namely, it is possible to associate with delay system the following infinite-dimensional model:
\begin{equation}\label{ctr_eqn_1s0}
\dot{x}={\mathcal A}x+{\mathcal B} u,\quad x\in H,
\end{equation}
where $H$ is a Hilbert space and the linear operator ${\mathcal A}$ is the generator of a $C_0$-semigroup.

For finite-dimensional linear control systems of the form~(\ref{ctr_eqn_1s0}),
Kalman's controllability concept is well-known: the reachability set from $0$ at time $T$ coincides
with the whole phase space (${\mathcal R}_T=H$) for some $T>0$. Moreover, if there are no constraints on control,
then controllability time $T$ may be chosen arbitrarily.
However, if the  phase space $H$ is infinite-dimensional, then the described property does not hold, in general.
For delay systems, the  reachability set is always a subset of the domain ${\mathcal D}({\mathcal A})$
of the operator ${\mathcal A}$, thus, it is natural to pose the problem of reaching the whole set ${\mathcal D}({\mathcal A})$.
Besides, for delay systems the minimal controllability time can not be arbitrarily small,
what leads us to the problem of finding this minimal time of transfer from
$0$ to an arbitrary state of ${\mathcal D}({\mathcal A})$.
The following criterion of   exact  controllability had been obtained by coauthors
of the present paper \cite{Rabah_Sklyar_2007}.

\begin{thm}\label{ctr_thr_intr}
Neutral type system (\ref{ctr_eqn_1s}) is exactly  controllable if and only if the
following conditions are verified:
\begin{enumerate}
 \item[{\rm (i)}] there are no $\lambda \in \mathbb{C}$ and $y\in\mathbb{C}^n\backslash\{0\}$,
 such that $\left(\Delta_{\mathcal A}(\lambda)\right)^*y=0$ and $B^*y=0$, where
\begin{equation} \label{ctr_eqn_Delta}
\displaystyle
\Delta_{\mathcal A}(\lambda) = \lambda I
- \lambda {\rm e}^{-\lambda}A_{-1}-
 \lambda \int\nolimits^0_{-1}{\rm e}^{\lambda s} A_2(s){\mathrm d}s  - \int\nolimits^0_{-1}{\rm e}^{\lambda s} A_3(s){\mathrm d}s,
\end{equation}
or equivalently, ${\rm rank} (\Delta_{\mathcal A} (\lambda)\;  B)=n$ for any $\lambda \in \mathbb{C}$.
\item[{\rm (ii)}] there are no $\mu \in \sigma(A_{-1})$ and $y\in\mathbb{C}^n\backslash\{0\}$, such that
$A_{-1}^*y=\bar \mu y$ and $ B^*y=0$, or equivalently,
${\rm rank}(B\quad A_{-1}B\;\cdots\; A_{-1}^{n-1}B)=n$.
\end{enumerate}
Moreover, if the conditions {\rm (i)} and {\rm (ii)} hold, then the system is exactly controllable
at any time $T>n_1$ and not  exactly  controllable at any time $T\le n_1$,
where $n_1$ is the first controllability index of the pair $(A_{-1},B)$.

If maximal delay is equal to $h$  then the critical time of controllability equals to $T=n_1h$.
 \end{thm}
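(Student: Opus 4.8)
\medskip\noindent\textbf{Proof idea.}\ The plan is to realise \eqref{ctr_eqn_1s} as the abstract system \eqref{ctr_eqn_1s0} in $M_2=\mathbb C^n\times L_2([-1,0];\mathbb C^n)$, with $\mathcal A$ the generator of the solution semigroup on the domain of states with $H^1$ history component and $\mathcal Bu=\mathrm{col}(Bu,0)$, and then to translate exact controllability ($\mathcal R_T=D(\mathcal A)$) into a vector moment problem read off from the spectrum of $\mathcal A$. One first records the spectral data: $\sigma(\mathcal A)=\{\lambda:\det\Delta_{\mathcal A}(\lambda)=0\}$; an eigenvector at $\lambda$ has history part $\mathrm e^{\lambda\theta}v$ with $\Delta_{\mathcal A}(\lambda)v=0$; an eigenvector $\varphi_\lambda$ of $\mathcal A^*$ satisfies $\mathcal B^*\varphi_\lambda=B^*y$ with $\Delta_{\mathcal A}(\lambda)^*y=0$; and for $|\lambda|\to\infty$ the eigenvalues split into $n$ sequences (with multiplicity) clustering on the vertical lines $\mathrm{Re}\,\lambda=\ln|\mu_j|$, $\mu_j\in\sigma(A_{-1})\setminus\{0\}$, with imaginary parts asymptotically spaced by $2\pi$.

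Necessity of (i) and (ii) is the statement that $\mathcal R_T$ cannot be dense in $M_2$ — let alone equal to $D(\mathcal A)$ — unless the two rank conditions hold, and it is obtained from an infinite-dimensional Hautus-type argument applied to the smallest $\mathrm e^{\mathcal A t}$-invariant subspace containing $\mathrm{Im}\,\mathcal B$. The point-spectrum part gives (i): if $\Delta_{\mathcal A}(\lambda)^*y=0$ and $B^*y=0$ for some $y\ne0$, then along a trajectory with $x(0)=0$ one has $\frac{\mathrm d}{\mathrm dt}\langle x(t),\varphi_\lambda\rangle=\lambda\langle x(t),\varphi_\lambda\rangle$, so $\langle x(t),\varphi_\lambda\rangle\equiv0$ and $\varphi_\lambda\perp\mathcal R_T$. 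The asymptotic (``difference operator'') part gives (ii): if $A_{-1}^*y=\bar\mu y$ with $B^*y=0$, the vectors $B^*y_k$ attached to the spectral clusters of $\mathcal A^*$ on the line $\mathrm{Re}\,\lambda=\ln|\mu|$ tend to $0$, so that direction of $D(\mathcal A)$ is missed. The equivalences with the displayed rank conditions are the PBH tests for $(\Delta_{\mathcal A}(\lambda),B)$ and for $(A_{-1},B)$.

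For sufficiency, assume (i) and (ii). A bounded feedback $u=Fx+v$ changes neither $\mathcal R_T$ (the map $v\mapsto u$ is a causal isomorphism of $L_2([0,T];\mathbb R^r)$) nor $A_{-1},B$ (so (ii) survives), and preserves (i) (column operations on $(\Delta_F(\lambda)\ B)$). Using the controllability of $(A_{-1},B)$ and a canonical form of this pair, choose $F$ so that the generalized eigenvectors of $\mathcal A_F=\mathcal A+\mathcal BF$ form a Riesz basis of $M_2$ — genuinely of eigenvectors when $A_{-1}$ is semisimple, and of the finite-dimensional $\mathcal A_F$-invariant subspaces attached to the spectral clusters in general. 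Expanding $x_T\in D(\mathcal A)$ and $v$ along this basis, exact controllability at time $T$ becomes the solvability, for every right-hand side in the appropriate weighted $\ell_2$ space, of the moment problem
\[
\int_0^T \mathrm e^{\bar\lambda_k(T-t)}\bigl(B^*y_k\bigr)^{\!*}v(t)\,\mathrm dt=c_k
\]
(with the usual modification on each cluster block), i.e. the requirement that $\{\mathrm e^{\bar\lambda_k s}B^*y_k\}_k$, suitably grouped, be a Riesz basis of $L_2([0,T];\mathbb R^r)$; condition (i) guarantees every $B^*y_k\ne0$ and, together with completeness of the eigenvectors, completeness of this family.

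It then remains to find the $T$ for which this family is a Riesz basis. One splits it into a finite part, linearly independent by (i), and the asymptotic part, which near the line $\mathrm{Re}\,\lambda=\ln|\mu_j|$ consists of packets of exponentials whose sizes and whose vectors $B^*y_k$ are controlled, through (ii) and the Kronecker structure of $(A_{-1},B)$, by the controllability indices $\kappa_1\ge\dots\ge\kappa_r$ ($\sum\kappa_i=n$, $\kappa_1=n_1$). A Bari/Paley--Wiener perturbation argument reduces the question to the classical theory of vector exponential families of prescribed density, which yields a Riesz basis precisely when $T$ exceeds the density threshold $\kappa_1=n_1$ and fails for $T\le n_1$; hence exact controllability for $T>n_1$ and its failure for $T\le n_1$ even when (i), (ii) hold. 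Rescaling time by $h$ turns delay $1$ into delay $h$ and the critical time into $n_1h$. The crux, and the main obstacle, is this last part together with the feedback step: securing a feedback that makes $\mathcal A_F$ a Riesz-spectral operator with sufficiently well-controlled spectral asymptotics (delicate when $A_{-1}$ is not semisimple or is singular), and then carrying out the uniform-in-$k$ analysis of the exponents $\lambda_k$ and vectors $B^*y_k$ needed to apply the exponential Riesz basis theory and to pin the critical time to exactly $n_1$; the operator model, the necessity, and the bookkeeping of the moment problem are comparatively routine.
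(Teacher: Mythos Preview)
Your outline is broadly sound and follows the strategy of Rabah--Sklyar (2007), which the present paper cites as the origin of Theorem~\ref{ctr_thr_intr}. But it differs from this paper's proof at precisely the point you flag as the ``crux'': the feedback step. You restrict yourself to a \emph{bounded} state feedback $u=Fx+v$, correctly note that this leaves $A_{-1}$ untouched, and therefore resign yourself to working with spectral clusters and invariant subspaces when $A_{-1}$ is not semisimple. The paper's key device is instead the \emph{unbounded} feedback $u(t)=P\dot z(t-1)+v(t)$ (Lemma~\ref{ctr_lem_lem7.3}), which replaces $A_{-1}$ by $A_{-1}+BP$ while preserving the reachable set; by pole placement for the controllable pair $(A_{-1},B)$ one then arranges $A_{-1}+BP$ to have $n$ simple real eigenvalues, so that $\mathcal A$ always admits a Riesz basis of genuine eigenvectors and the moment problem reduces to the clean scalar/Frobenius-block form (\ref{ctr_eqn_2.20b}). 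This is exactly what lets the paper bypass the ``delicate'' analysis you anticipate. Your bounded-$F$ route can be made to work, but it is the harder 2007 argument, not the one presented here.

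There is also a genuine gap in your necessity of~(ii). Your argument (``the vectors $B^*y_k$ tend to $0$'') presumes an asymptotic family of eigenvalues on the line $\mathrm{Re}\,\lambda=\ln|\mu|$, which is only available when $\mu\neq 0$. When the uncontrollable eigenvalue of $A_{-1}$ is $\mu=0$ there is no such spectral line to exploit; the paper handles this case separately (Theorem~\ref{ctr_thr_thm63}) by premultiplying the delay equation by the annihilating vector $v_0^*$ and observing that the resulting map $z\mapsto v_0^*\dot z$ factors through a compact integral operator, hence cannot be onto $L_2$. You should add this step, or at least note that the singular case needs a different mechanism.
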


We note, that (\ref{ctr_eqn_1s}) is a system with distributed delay for which,
in contrast to systems with several discrete delays
(see \cite{Banks_Jacobs_Langenhop_1975,Jacobs_Langenhop_1976,Bartosiewicz_1983,
Khartovskii_Pavlovskaya_2013,OConnor_Tarn_1983b,Rivera_Langenhop_1978} and references therein),
 the explicit form of the semigroup is unknown, in general,
what makes the analysis much more complicated.
We also note, that an important advantage of the theorem is
to give the exact critical time of controllability.

Besides, we can note that for linear retarded systems ($A_{-1}=0$),
the conditions of exact controllability imply $\mathrm{rank\,} B=n$,
which is a very strong condition, this means that exact controllability is more typical for neutral type systems.

To study the exact controllability we use the moment problem approach: the steering conditions of controllable states are
represented as a vectorial trigonometric moment problem with
respect to a special Riesz basis. We analyze the solvability
of the obtained non-Fourier moment problem using methods developed in \cite{Avdonin_Ivanov_1995}
(see also \cite{Young_1980}).

The existence of a basis of the state space consisting of eigenvectors (or generalized eigenvectors)
simplifies essentially the expression of the moment problem (see \cite{Rabah_Sklyar_Springer} and \cite{Shklyar_2011}).
 In our case, the existence of a basis of eigenvectors  is determined by
 the form of the matrix $A_{-1}$ of neutral term of the system (\ref{ctr_eqn_1s}), and, in general,
such basis does not exist (see \cite{Rabah_Sklyar_Rezounenko_2003,Rabah_Sklyar_Rezounenko_2005}).
This makes quite sophisticated the procedure of the choice of a Riesz basis and further manipulations with it
in general case (\cite{Rabah_Sklyar_2007}).

However, by means of a change of control in the initial  system, it is
possible to pass over to an equivalent controllability problem for a system with a matrix $A_{-1}$ of a simple structure.
This structure guarantees the existence of a Riesz basis of
 eigenvectors for the state space.
The form of the corresponding moment problem becomes  simpler
what makes the constructions and the proofs of the main results clear and  illustrative.

In this paper we give the proof of Theorem~\ref{ctr_thr_intr} for the system (\ref{ctr_eqn_1s}) with $A_{-1}$ of a special form
and show that this fact implies the proof for a system with an arbitrary matrix $A_{-1}$.
Besides, we consider the controllability problem for co-called mixed retarded-neutral type systems
(see also \cite{Rabah_Sklyar_Barkhayev_2012}),
which was considered in \cite{Rabah_Sklyar_2007}, and prove that if the neutral term is singular ($\det A_{-1}=0$)
and the pair $(A_{-1}, B)$ is uncontrollable, then the system (\ref{ctr_eqn_1s}) is uncontrollable as well.

The paper is organized as follows.  In
Section~\ref{sec:prelim} we introduce the abstract equation and discuss how we can consider without loss of generality that the system
has a special form with a Riesz basis of eigenvectors.
In Section~\ref{sec:Rieszbasis}, using spectral Riesz bases, we represent the steering conditions as a vectorial moment problem.
Section~\ref{sec:necess} is devoted to proof of necessity of controllability conditions and in Sections~\ref{sec:suff1} and~\ref{sec:suff2} we prove
sufficiency of these conditions for the cases of one-dimensional and multi-dimensional controls.
Finally, in Section~\ref{sec:example} we give an example illustrating the obtained results.

\section{Equivalent systems}\label{sec:prelim}%
We consider the operator model of time-delay systems introduced in \cite{Burns_Herdman_Stech_1983} (see also \cite{Ito_Tarn_1985}).
The state space is $M_2(-1,0; \mathbb{C}^n)=\mathbb{C}^n \times L_2(-1,0; \mathbb{C}^n)$, shortly $M_2$,
and the problem (\ref{ctr_eqn_1s}) may be rewritten in the following form:
\begin{equation}\label{ctr_eqn_*}
\dot x(t) = {\mathcal {A}} x(t) + {\mathcal {B}} u(t), \quad
{\mathcal {A}} =\left(\begin{array}{cc} 0 & L \\ 0 & \frac{{\;{\rm d}}}{{\;{\rm d}} \theta} \end{array}\right),\;
{\mathcal {B}} =\left(\begin{array}{c} B \\ 0\end{array}\right),
\end{equation}
where the domain of the operator $\mathcal A$ is
$${\mathcal D}({\mathcal A})= \{ \left (y,  z(\cdot)\right ) \in M_2 : z\in H^1(-1,0;\mathbb{C}^n), y=z(0)-A_{-1}z(-1)\}.$$

The reachability set from the initial state 0 at time T is defined by
$$
{\mathcal {R}}_T=\left \{x: x=\int\nolimits_0^T {\rm e}^{{\mathcal {A}} t}{\mathcal {B}} u(t){\;{\rm d}} t,
\quad u(\cdot)\in L_2(0,T;\mathbb{C}^r)\right\}.
$$
Further we show that ${\mathcal {R}}_T\subset{\mathcal {D}}(\mathcal {A})$ for all $T>0$.
\begin{df}
We say that the system {\rm (\ref{ctr_eqn_*})} is exactly controllable from zero by controls from $L_2$,
if there exists a time $T_0$ (critical time), such that for all $T>T_0$ one has
$${\mathcal {R}}_T={\mathcal {D}}({\mathcal {A}}),$$
and for all $T<T_0$: ${\mathcal {R}}_T\not={\mathcal {D}}({\mathcal {A}})$.
\end{df}
The given definition means that for some $T>0$ the set of solutions $\{z(t):\: t\in[T-1,T]\}$
of the system~(\ref{ctr_eqn_1s}) coincides with space $H^1(T-1,T;\mathbb{C}^n)$.

\begin{lem}\label{ctr_lem_lem7.3}
If the system~(\ref{ctr_eqn_1s}) is exactly  controllable at time $T$, then
for any matrix $P\in\mathbb{C}^{n\times r}$ the perturbed system
\begin{eqnarray}\label{ctr_eqn_1sp}
\displaystyle
\dot z(t) =  (A_{-1}+BP)\dot z(t-1) + L z_t + Bu
\end{eqnarray}
is exactly controllable at the same time $T$.
\end{lem}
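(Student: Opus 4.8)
The plan is to exhibit an explicit change of control variable that transforms the perturbed system (\ref{ctr_eqn_1sp}) back into the original system (\ref{ctr_eqn_1s}), without changing the time interval. The key observation is that the extra term $BP\dot z(t-1)$ is, up to the delay, in the range of $B$: if we set $u(t) = v(t) - P\dot z(t-1)$ for a new control $v$, then $Bu(t) = Bv(t) - BP\dot z(t-1)$, and substituting into (\ref{ctr_eqn_1sp}) gives $\dot z(t) = (A_{-1}+BP)\dot z(t-1) + Lz_t + Bv(t) - BP\dot z(t-1) = A_{-1}\dot z(t-1) + Lz_t + Bv(t)$, which is precisely (\ref{ctr_eqn_1s}) with control $v$. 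Thus the two systems generate the same family of trajectories, once one matches controls by this affine (state-feedback-type) substitution.

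The main point to check carefully is the admissibility of the control transformation at the level of the $L_2$-reachability set over $[0,T]$, i.e. that $v\in L_2(0,T;\mathbb{C}^r)$ if and only if the corresponding $u\in L_2(0,T;\mathbb{C}^r)$. First I would fix an arbitrary target state $w\in \mathcal D(\mathcal A)$ and, using exact controllability of (\ref{ctr_eqn_1s}) at time $T$, pick $v\in L_2(0,T;\mathbb{C}^r)$ steering $0$ to $w$ for the original system; one then wants to recover $u(t)=v(t)-P\dot z(t-1)$ with the trajectory $z$ of that run. For $t\in[0,1]$ the delayed term $\dot z(t-1)$ involves only the prehistory, which is $0$ here (steering from zero), so $u\equiv v$ there; for $t>1$ one needs $\dot z\in L_2$ on $[0,T-1]$, which follows because the $M_2$-trajectory $x(t)=(z(t)-A_{-1}z(t-1),\,z_t(\cdot))$ lies in $\mathcal D(\mathcal A)$, whence its second component is in $H^1(-1,0;\mathbb{C}^n)$ and $\dot z_t\in L_2$. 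Running this argument in both directions (and noting that a trajectory of one system under a control is verbatim a trajectory of the other under the substituted control) gives $\mathcal R_T(\ref{ctr_eqn_1sp}) = \mathcal R_T(\ref{ctr_eqn_1s})$, hence also the equality for all $T'<T$ fails or holds simultaneously, so the critical times coincide and in particular (\ref{ctr_eqn_1sp}) is exactly controllable at time $T$.

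The only genuine obstacle is bookkeeping the regularity of $\dot z$ on the whole interval: one must make sure that substituting $u(t)=v(t)-P\dot z(t-1)$ does not create a feedback loop that degrades regularity. This is handled by a step-by-step (method-of-steps) argument: on $[0,1]$ the trajectory $z$ is already determined and $\dot z\in L_2(0,1)$ by the above; then on $[1,2]$ the control $u$ is determined by $v$ and the already-known $\dot z$ on $[0,1]$, so $z$ on $[1,2]$ solves (\ref{ctr_eqn_1s}) with an $L_2$ right-hand side, giving $\dot z\in L_2(1,2)$; iterating over $[k,k+1]$ up to $T$ yields $u\in L_2(0,T;\mathbb{C}^r)$. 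The reverse direction is symmetric with the roles of $A_{-1}$ and $A_{-1}+BP$ (and $P$, $-P$) interchanged. This completes the proof.
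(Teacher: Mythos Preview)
Your proof is correct and follows essentially the same route as the paper: both rely on the control substitution $v(t)=u(t)-P\dot z(t-1)$ (up to a relabeling of $u$ and $v$), observe that the same trajectory $z$ then satisfies the other system, and verify that the new control remains in $L_2(0,T;\mathbb{C}^r)$ because the solution satisfies $z\in H^1$ on the relevant interval. The paper's argument is a bit terser---it simply notes $z(\cdot-1)\in H^1(0,T;\mathbb{C}^n)$ once and for all, without the step-by-step iteration you give---but the content is the same.
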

\begin{proof}
Assume that the system(\ref{ctr_eqn_1s}) is controllable at the time $T$.
This means that for any function $f(t) \in H^1(T-1,T;\mathbb{C}^n)$
there exists a control $u(t) \in L_2(0,T;\mathbb{C}^n)$, such that the solution of the equation
\begin{equation}\label{ctr_eqn_1st}
\displaystyle
\dot z(t) = A_{-1}\dot z(t-1) + L z_t + Bu(t),
\end{equation}
with the initial condition $z(t)=0$, $t\in[-1,0]$ satisfies the relation $z(t)=f(t)$, $t\in[T-1,T]$.
Let us rewrite (\ref{ctr_eqn_1st}) in the form
$$
\displaystyle
\dot z(t) = (A_{-1}+BP)\dot z(t-1) + L z_t + Bv(t),
$$
where $v(t)=u(t)-P\dot z(t-1)$, $t \in [0,T]$. Since $z(t-1) \in H^1(0,T;\mathbb{C}^n)$,
then $v(t) \in L_2(0,T;\mathbb{C}^n)$.
Therefore, the control $v(t)$ transfers the state $z(t)=0$, $t\in [-1,0]$
to the state $z(t)=f(t)$, $t\in [T-1,T]$ by virtue of the system (\ref{ctr_eqn_1sp}).
This means that (\ref{ctr_eqn_1sp}) is also exactly controllable at the time $T$.
\end{proof}

 We have also an equivalence in the conditions of exact controllabilty in Theorem~\ref{ctr_thr_intr}.
\begin{lem} If system~(\ref{ctr_eqn_1s}) satisfies the conditions (i) and (ii) of Theorem~\ref{ctr_thr_intr},
then a perturbed system~(\ref{ctr_eqn_1sp}) with an arbitrary matrix $P$ satisfies the same conditions.
\end{lem}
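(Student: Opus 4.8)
The claim is that the two algebraic conditions (i) and (ii) of Theorem~\ref{ctr_thr_intr} are invariant under the feedback substitution $A_{-1}\mapsto A_{-1}+BP$. I would prove each condition separately, since they are genuinely independent statements: (i) concerns the full characteristic matrix $\Delta_{\mathcal A}(\lambda)$ together with $B$, while (ii) concerns only the pair $(A_{-1},B)$. In both cases the heart of the matter is that adding $BP$ to $A_{-1}$ does not change the column space of the relevant matrix ``modulo the range of $B$''.

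For condition (i): let $\Delta_{\mathcal A}^P(\lambda)$ denote the characteristic matrix~(\ref{ctr_eqn_Delta}) of the perturbed system, i.e. with $A_{-1}$ replaced by $A_{-1}+BP$. Directly from~(\ref{ctr_eqn_Delta}) one has
\begin{equation*}
\Delta_{\mathcal A}^P(\lambda) = \Delta_{\mathcal A}(\lambda) - \lambda {\rm e}^{-\lambda} B P .
\end{equation*}
Hence the two augmented matrices are related by a right multiplication:
\begin{equation*}
\left(\Delta_{\mathcal A}^P(\lambda)\quad B\right)
= \left(\Delta_{\mathcal A}(\lambda)\quad B\right)
\begin{pmatrix} I & 0 \\ -\lambda {\rm e}^{-\lambda} P & I\end{pmatrix},
\end{equation*}
and the second factor is an invertible $(n+r)\times(n+r)$ matrix for every $\lambda\in\mathbb C$. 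Therefore ${\rm rank}\left(\Delta_{\mathcal A}^P(\lambda)\quad B\right)={\rm rank}\left(\Delta_{\mathcal A}(\lambda)\quad B\right)$ for all $\lambda$, so the rank-$n$ version of (i) is preserved. Equivalently, in the kernel formulation: if $\left(\Delta_{\mathcal A}^P(\lambda)\right)^*y=0$ and $B^*y=0$, then from the displayed identity $\left(\Delta_{\mathcal A}(\lambda)\right)^*y = \left(\Delta_{\mathcal A}^P(\lambda)\right)^*y + \lambda {\rm e}^{-\bar\lambda} P^* B^* y = 0$, and symmetrically, so the sets of such ``bad'' pairs $(\lambda,y)$ coincide.

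For condition (ii): this is the classical fact that controllability of a pair is invariant under state feedback, here applied to $(A_{-1},B)$. I would use the rank formulation ${\rm rank}(B\ \ A_{-1}B\ \cdots\ A_{-1}^{n-1}B)=n$; the span of these columns is the smallest $A_{-1}$-invariant subspace containing ${\rm range}\,B$, and since $BPv\in{\rm range}\,B$ for every $v$, a subspace is $A_{-1}$-invariant and contains ${\rm range}\,B$ if and only if it is $(A_{-1}+BP)$-invariant and contains ${\rm range}\,B$; hence the two controllability subspaces coincide. (Alternatively, via the kernel form: if $(A_{-1}+BP)^*y=\bar\mu y$ and $B^*y=0$, then $A_{-1}^*y=(A_{-1}+BP)^*y - P^*B^*y=\bar\mu y$ with $B^*y=0$, and conversely, so the obstruction sets agree.) Putting the two parts together gives the lemma. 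I expect no serious obstacle here: the only mildly delicate point is bookkeeping the adjoints and the factor $\lambda{\rm e}^{-\lambda}$ correctly in the identity for $\Delta_{\mathcal A}^P$, after which both parts are short linear-algebra arguments.
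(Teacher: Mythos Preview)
Your proof is correct and follows essentially the same approach as the paper: both arguments hinge on the identity $\Delta_{\mathcal A}^P(\lambda)=\Delta_{\mathcal A}(\lambda)-\lambda{\rm e}^{-\lambda}BP$ for condition~(i), and both invoke the classical feedback-invariance of Kalman controllability for condition~(ii). Your presentation via the block-triangular factorization of $(\Delta_{\mathcal A}^P(\lambda)\ \ B)$ is a slightly cleaner way to see the rank invariance than the paper's adjoint computation, but the content is the same; note only that in your kernel version the coefficient should read $\bar\lambda{\rm e}^{-\bar\lambda}$ after taking adjoints, though this is harmless since the term is killed by $B^*y=0$ anyway.
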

\begin{proof}
Indeed, let us denote the operator corresponding to the system~(\ref{ctr_eqn_1sp}) by $\widehat{\mathcal A}$.
Thus, if the condition (i) does not hold for (\ref{ctr_eqn_1s}): $\Delta_{\mathcal A}^*(\lambda)y=0$ and $B^*y=0$,
then
$$
\Delta_{\widehat{\mathcal A}}^*(\lambda)y=[\Delta_{\mathcal A}^*(\lambda)-\lambda {\rm e}^{-\lambda}P^*B^*]y=0
$$
what means that the condition (i) does not hold for the system~(\ref{ctr_eqn_1sp}).

The equivalency of the condition~(ii) for systems (\ref{ctr_eqn_1s}) and (\ref{ctr_eqn_1sp})
is a well-known classical result (see, e.g. \cite{Wonham_1985}):
$$
{\mathrm {rank}\,}(B\quad A_{-1}B\;\cdots\; A_{-1}^{n-1}B)= {\mathrm {rank}\,}(B\quad (A_{-1}+BP)B\;\cdots\; (A_{-1}+BP)^{n-1}B).
$$
\end{proof}
\begin{cor}
Therefore, if we prove Theorem~\ref{ctr_thr_intr} for system~(\ref{ctr_eqn_1s}) with a pair $(A_{-1},B)$,
then, we also prove this theorem for all systems with pair of matrices $(\widehat{A}_{-1}, B)$,
where $\widehat{A}_{-1}=A_{-1}+BP$.
\end{cor}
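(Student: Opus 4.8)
The plan is to treat this corollary as a pure bookkeeping consequence of the two lemmas immediately preceding it, together with the classical feedback‑invariance of controllability indices; no fresh analysis of the delay equation is required. The first thing I would record is that the feedback substitution $A_{-1}\mapsto\widehat A_{-1}=A_{-1}+BP$ is invertible, namely $A_{-1}=\widehat A_{-1}+B(-P)$. Hence both preceding lemmas, although each is stated as a one‑sided implication, in fact yield equivalences once one also applies them with $-P$ and with the roles of the two systems interchanged: by Lemma~\ref{ctr_lem_lem7.3} the delay system~(\ref{ctr_eqn_1s}) with pair $(A_{-1},B)$ is exactly controllable at a given time $T$ if and only if the perturbed system~(\ref{ctr_eqn_1sp}) is exactly controllable at the same $T$; and by the lemma on the controllability conditions, conditions~(i),(ii) of Theorem~\ref{ctr_thr_intr} hold for $(A_{-1},B)$ if and only if they hold for $(\widehat A_{-1},B)$. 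Because the per‑$T$ controllability statements are equivalent for every $T$, the two systems have identical sets of admissible transfer times, so one is exactly controllable (in the sense of the Definition) precisely when the other is, and then with the same critical time.

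Next I would assume Theorem~\ref{ctr_thr_intr} established for every system of the form~(\ref{ctr_eqn_1s}), in particular for the one with pair $(A_{-1},B)$, and transport each half of it along these equivalences. For the ``if'' part: if the $(\widehat A_{-1},B)$‑system satisfies~(i),(ii), then so does the $(A_{-1},B)$‑system, hence by the assumed theorem the latter is exactly controllable at every $T>n_1$ and at no $T\le n_1$, where $n_1$ is the first controllability index of $(A_{-1},B)$; by the controllability equivalence above, the $(\widehat A_{-1},B)$‑system is then exactly controllable with the very same critical time $n_1$ (equivalently $n_1h$ for maximal delay $h$). For the ``only if'' part: if the $(\widehat A_{-1},B)$‑system is exactly controllable, then so is the $(A_{-1},B)$‑system, whence~(i),(ii) hold for $(A_{-1},B)$ by the assumed theorem, and therefore for $(\widehat A_{-1},B)$ as well. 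This already yields the equivalence in Theorem~\ref{ctr_thr_intr} for the perturbed pair and pins its critical time to $n_1$.

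The only remaining, and purely cosmetic, step is to rewrite the critical time so that it refers to the data of the perturbed system, i.e.\ to observe that $n_1$ is also the first controllability index of $(\widehat A_{-1},B)=(A_{-1}+BP,B)$. This is the feedback‑invariance of the (Kronecker / controllability) indices of a controllable pair — a classical fact, see e.g.\ \cite{Wonham_1985} — of which the rank identity $\mathrm{rank}\,(B\ \cdots\ A_{-1}^{n-1}B)=\mathrm{rank}\,(B\ \cdots\ (A_{-1}+BP)^{n-1}B)$ used in the preceding lemma is one manifestation. I do not expect any genuine obstacle: the whole content of the corollary is invertibility of the feedback, plus the two lemmas already proved, plus this standard invariance, so after assembling these remarks the argument is only a few lines. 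If one place calls for a little care it is simply remembering that ``exact controllability at time $T$'' as used in Lemma~\ref{ctr_lem_lem7.3} is a statement about trajectories of the delay equation on $[T-1,T]$ and is not disturbed by the fact that the operator models associated with $(A_{-1},B)$ and $(\widehat A_{-1},B)$ carry formally different domains ${\mathcal D}({\mathcal A})$.
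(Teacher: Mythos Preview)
Your proposal is correct and follows exactly the approach the paper intends: the corollary is stated without proof in the paper (the ``Therefore'' signals it is an immediate consequence of the two preceding lemmas), and you have simply written out that deduction, correctly noting the invertibility of the feedback to upgrade the one-sided lemmas to equivalences. Your explicit remark that the first controllability index $n_1$ is feedback-invariant is a detail the paper leaves implicit but which is indeed needed for the critical-time clause to transfer; this is a welcome clarification rather than a deviation.
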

If the pair $(A,B)$ is controllable, then (see, e.g. \cite{Wonham_1985})
for any set $S=\{\mu_1,\ldots,\mu_n\}\subset\mathbb{C}$,
there exists matrix $P\in\mathbb{C}^{r\times n}$ such that the set $S$
is the spectrum of $\sigma(A+BP)=S$.
Thus, if we fix $n$ distinct real numbers
\begin{equation}\label{ctr_eqn_spec_A-1}
\{\mu_1,\ldots,\mu_n\}\subset\mathbb{R}, \quad\mu_i\not=\mu_j, i\not=j,\quad \mu_i\not\in\{0,1\},
\end{equation}
we can find  a change of control $u(t)=P\dot{z}(t-1)+v(t)$, $P\in\mathbb{C}^{r\times n}$,
and a tranformation of the state $z=Cw$, which reduce the system to the following form
\begin{equation}\label{ctr_eqn_1s_11}
\dot{w}(t) = \widehat{A}_{-1}\dot{w}(t-1)+
\int^0_{-1}\widehat{A}_2(\theta) \dot{w}(t+\theta) {\;{\rm d}}\theta
+\int^0_{-1}\widehat{A}_3(\theta){w}(t+\theta){\;{\rm d}}\theta +\widehat{B}v,
\end{equation}
where $\widehat{A}_{-1}=C^{-1}(A_{-1}+BP)C$, $\widehat{A}_i(\theta)=C^{-1}{A}_i(\theta)C$, $\widehat{B}=C^{-1}B$,
satisfy the following conditions:
\begin{itemize}
\item[(a)] the spectrum of $\widehat{A}_{-1}$ is $\sigma(\widehat{A}_{-1})=\{\mu_m\}_{m=1}^n$;
\item[(b)] the pair $(\widehat{A}_{-1}, \widehat{B})$ is in Frobenius normal form (see \cite{Wonham_1985}), i.e.
\begin{equation}\label{ctr_eqn_frobenius}
\widehat{A}_{-1}={\rm diag}\{F_1,\dots,F_r\},\; F_i=\left(
\begin{array}{ccccc}
0&1&0&\cdots &0 \\
0&0&1&\cdots &0 \\
\vdots & \vdots & \vdots &\ddots &\vdots \\
0&0&0&\cdots&1\\
a_1^i&a_2^i&a_3^i&\cdots&a_{s_i}^i
\end{array}
\right)
\end{equation}
and $\widehat{B}={\rm diag}\{g_1,\dots,g_r\}$, where $g_i=(0,\:0,\ldots, 1)^{\mathrm T}\in \mathbb{C}^{s_i}$.
\end{itemize}
From these considerations we obtain the following lemma.
\begin{lem}
 The proof of sufficiency of Theorem~\ref{ctr_thr_intr} for the familly of systems~(\ref{ctr_eqn_1s_11}), verifying conditions
(a)-(b), implies the sufficiency of these condition for arbitrary systems of type~(\ref{ctr_eqn_1s}).
\end{lem}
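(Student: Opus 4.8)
The plan is to prove a transfer principle: every system of type~(\ref{ctr_eqn_1s}) whose pair $(A_{-1},B)$ is controllable can be converted, by an invertible change of control together with an invertible change of state coordinates, into a member of the family~(\ref{ctr_eqn_1s_11}) verifying (a)--(b), and every piece of data entering Theorem~\ref{ctr_thr_intr} — the controllability conditions (i) and (ii), the property of being exactly controllable at a prescribed time $T$, and the first controllability index $n_1$ — is preserved under these two operations. Once sufficiency is known for the special family, sufficiency for an arbitrary system will follow by chasing these invariances.

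Concretely, I would start from a system~(\ref{ctr_eqn_1s}) satisfying (i) and (ii). By (ii) the pair $(A_{-1},B)$ is controllable, so — as recalled just before the statement — I may fix $n$ distinct reals $\mu_1,\dots,\mu_n\notin\{0,1\}$, a feedback matrix $P$ and an invertible $C$ producing, via the intermediate system $\dot z=(A_{-1}+BP)\dot z(t-1)+Lz_t+Bv$ and the similarity $z=Cw$, a member of the family~(\ref{ctr_eqn_1s_11}) verifying (a)--(b). I would then check that this transformed system still satisfies (i) and (ii): invariance under the feedback $u=P\dot z(t-1)+v$ is exactly the Lemma asserting that conditions (i)--(ii) survive the perturbation~(\ref{ctr_eqn_1sp}), and invariance under the similarity $z=Cw$ is immediate, since it merely conjugates $\mathcal A$, $\mathcal B$ and $\Delta_{\mathcal A}(\lambda)$ by invertible matrices and maps $\mathcal D(\mathcal A)$ bijectively onto the new domain. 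Applying the assumed sufficiency to the transformed system, I would obtain that it is exactly controllable at every $T>n_1'$, where $n_1'$ is the first controllability index of $(\widehat{A}_{-1},\widehat{B})$, and then invoke the classical invariance of controllability indices under state feedback and state similarity (see~\cite{Wonham_1985}) to conclude $n_1'=n_1$.

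It then remains to carry exact controllability back to~(\ref{ctr_eqn_1s}). The similarity $z=Cw$ is trivially reversible — it takes reachability sets onto reachability sets and $\mathcal D(\widehat{\mathcal A})$ onto $\mathcal D(\mathcal A)$ — so the intermediate system is exactly controllable at every $T>n_1$; and since this intermediate system is precisely~(\ref{ctr_eqn_1sp}) for the matrix $P$, I would apply Lemma~\ref{ctr_lem_lem7.3} to~(\ref{ctr_eqn_1sp}) with the perturbation matrix $-P$, which restores the neutral term to $A_{-1}$ because $B(-P)=A_{-1}-(A_{-1}+BP)$, and deduce that~(\ref{ctr_eqn_1s}) is exactly controllable at every $T>n_1$. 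The same chain of invariances also transfers the negative half of the critical-time statement (no exact controllability for $T\le n_1$), so the critical time of~(\ref{ctr_eqn_1s}) is again $n_1$. I expect the only genuinely delicate point to be this bidirectional use of Lemma~\ref{ctr_lem_lem7.3}: it is stated as a one-way implication, and one has to notice that feeding it the already-perturbed system with the opposite perturbation closes the loop — the accompanying check that $v=u-P\dot z(t-1)$ keeps controls in $L_2$ being exactly the estimate already carried out in the proof of that lemma.
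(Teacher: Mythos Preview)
Your proposal is correct and follows exactly the route the paper intends: the lemma is presented there as an immediate consequence of the preceding considerations (Lemma~\ref{ctr_lem_lem7.3}, the invariance-of-conditions lemma, the corollary, and the pole-placement/Frobenius-form reduction), and you have simply spelled these out in full, including the bidirectional use of Lemma~\ref{ctr_lem_lem7.3} via the perturbation $-P$ and the classical invariance of $n_1$ under feedback and similarity. There is nothing to add.
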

\begin{rem}
In the proof of Theorem~\ref{ctr_thr_intr} for the case of one-dimensional control ($r=1$) it is enough
to assume only the condition (a). However, in the proof of general the  case (multidimensional control)
we need both conditions (a) and (b).
\end{rem}

In the paper \cite {Rabah_Sklyar_2007} the necessity of condition (ii) is proved with the assumption that
the matrix  $A_{-1}$   is non-singular. In the present paper, we complete the proof: if the pair  $(A_{-1},B)$ is not controllable,
then the system ~(\ref{ctr_eqn_1s}) is not controllable as
well (Theorem~\ref{ctr_thr_thm63}).

Further, without loss of generality, we may assume that the conditions (a) and (b)
hold for the pair $(A_{-1}, B)$.
Due to this construction, we have  $\det A_{-1}\not=0$ and we denote by $\{c_m\}_{m=1}^n$
the basis of normed eigenvectors of $A_{-1}$.
\section{Riesz basis and the moment problem}\label{sec:Rieszbasis}

Let us denote by $\widetilde {\mathcal A}$ the operator $\mathcal A$ in the case $A_2(\theta)=A_3(\theta)\equiv 0$.
The eigenvalues of $\widetilde {\mathcal A}$ are of the form (see \cite{Rabah_Sklyar_Rezounenko_2005}):
$$
\sigma({\widetilde {\mathcal A})=\{ \widetilde{\lambda}_m^{k}= \ln|\mu_m|}
+ 2k\pi {\mathrm  i}, \; m=1,\ldots,n,\: k\in\mathbb{Z}\} \cup \{0\},
$$
where $\{\mu_1,\ldots, \mu_n\}= \sigma(A_{-1})$.
Since all eigenvalues of $A_{-1}$ are simple, then
the operator $\widetilde{\mathcal A}$ possesses simple eigenvalues only,
and to each eigenvalue $\widetilde{\lambda}_m^{k}$
corresponds only one eigenvector $\widetilde{\varphi}_{m,k}=\left(0,{\rm e}^{\widetilde{\lambda}_m^{k}t}c_m \right)^T$
and there are no root-vectors.
Moreover, the following estimates hold
$$
0<\inf\limits_{k\in \mathbb{Z}}\|\widetilde \varphi_{m,k}\|
\le \sup\limits_{k\in \mathbb{Z}}\|\widetilde \varphi_{m,k}\| <+\infty.
$$

The spectrum of $\mathcal A$ is of the following form (see \cite{Rabah_Sklyar_Rezounenko_2005}):
$$
\sigma(\mathcal A)=\{\ln|\mu_m| + 2k\pi {\mathrm  i} + O(1/k), \; m=1,\ldots,n, k\in\mathbb{Z}\}.
$$
There exists $N\in\mathbb{N}$ such that for all $m=1,\ldots,n$ and for all $k: |k|> N$
the total multiplicity of the eigenvalues of $\mathcal A$, contained in the circles $L_m^{k}(r^{(k)})$
equals to 1, where $L_m^{k}(r^{(k)})=L_m^{k}$ are circles with radii $r^{(k)}$  centered at $\widetilde{\lambda}_m^{k}$,
and the relation $\sum\limits_{k\in\mathbb{Z}}(r^{(k)})^2 < \infty$ is satisfied (\cite[Theorem~4]{Rabah_Sklyar_Rezounenko_2008}).
We denote these eigenvalues of the operator ${\mathcal A}$ as $\lambda_m^k$ and the corresponding
eigenvectors as $\varphi_{m,k}$, $m=1,\ldots,n$, $|k|> N$.

Assume that the vectors $\varphi_{m,k}$ are normed such that
$P_m^{(k)}\widetilde{\varphi}_{m,k}= \varphi_{m,k}$, where
$P_m^{(k)}=\frac{1}{2\pi {\mathrm i}}\int_{L^{(k)}_m} R(\lambda,{\mathcal {A}}) {\mathrm d}\lambda.$
The families $\{\varphi_{m,k}\}$ and $\{\widetilde{\varphi}_{m,k}\}$ are quadratically close:
$
\sum\limits_{|k|>N}\sum\limits_{m=1}^n \|\varphi_{m,k} - \widetilde{\varphi}_{m,k}\|^2<\infty
$,
what, in particular, implies the following estimates
\begin{equation}\label{ctr_eqn_1.2}
0<\inf_{|k|>N}\|\varphi_{m,k}\|\le \sup_{|k|>N}\|\varphi_{m,k}\| <+ \infty.
\end{equation}
The explicit form of eigenvectors of $\mathcal A$ is
$\varphi_{m,k}=\left((I-{\rm e}^{\lambda_m^k}A_{-1})x_{m,k},\: {\rm e}^{\lambda_m^k \theta}x_{m,k}\right)^T$,
where $x_{m,k}\in {\rm Ker}\Delta_{\mathcal A}(\lambda_m^k)$.

Outside the circles $L^{k}_m, \ |k| >N, \ m=1, \dots, n$, there is only a finite number of eigenvalues of ${\mathcal A}$,
which we denote by $\widehat \lambda_s$, $s=1,\ldots, \ell_N$ counted with multiplicities.
The corresponding generalized eigenvectors of the operator ${\mathcal A}$ we denote by $\widehat \varphi_s$.
The family
\begin{equation}\label{ctr_eqn_basis_A}
\{\varphi\}=\{\varphi_{m,k}\}\cup \{\widehat \varphi_s\}
\end{equation}
forms a Riesz basis of the space $M_2$ (\cite{Rabah_Sklyar_Rezounenko_2005}).

We denote by
\begin{equation}\label{ctr_eqn_basis_A_adj}
\{\psi\}=\{\psi_{m,k}\}\cup \{\widehat \psi_s\}
\end{equation}
the family of eigenvectors of the adjoint operator ${\mathcal A}^*$, which is biorthogonal to $\{\varphi\}$.
Here ${\mathcal A}^* \psi_{m,k} = \overline{\lambda_m^k} \psi_{m,k}$, $m=1,\ldots,n$, $|k|> N$ and $s=1,\ldots, \ell_N$.
The explicit form of eigenvectors of the adjoint operator ${\mathcal A}^*$ is
\begin{equation}\label{ctr_eqn_eigenvector}
\psi_{m,k}=\left(y_{m,k},\:
\left[\overline{\lambda_m^k} {\rm e}^{-\overline{\lambda_m^k} \theta}I -A_2^*(\theta)
+ \int_0^\theta {\rm e}^{\overline{\lambda_m^k} (s-\theta)} \left(A_3^*(s)+ \overline{\lambda_m^k} A_2^*(s)\right)\:{\rm d}s
\right]y_{m,k}\right)^T,
\end{equation}
where $y_{m,k}\in {\rm Ker}\Delta_{\mathcal A}^*(\overline{\lambda_m^k})$.

The family (\ref{ctr_eqn_basis_A_adj})  forms a Riesz basis of the space $M_2$.
The proofs of the propositions mentioned in this section may be found in
\cite{Rabah_Sklyar_Rezounenko_2003,Rabah_Sklyar_Rezounenko_2005,Rabah_Sklyar_Rezounenko_2008}.

\vskip3mm

Let us pose the controllability problem as a moment problem.
To do this, we expand the steering condition
$x_T=\left(\begin{array}{c} y_T \\ z_T(\cdot)\end{array}\right)=\int\limits_0^T {\rm e}^{{\mathcal A} t}{\mathcal B} u(t)\: {\rm d} t$
with respect to the biorthogonal bases $\{\varphi\}$ and $\{\psi\}$ given by (\ref{ctr_eqn_basis_A}) and (\ref{ctr_eqn_basis_A_adj}).
A state $x = (y,\: z(\cdot)) \in M_2$ is reachable at a time $T$ if and only if
$$
\sum_{\varphi \in\{\varphi\}}\left \langle x, \psi\right \rangle \varphi
= \sum_{\varphi \in\{\varphi\}}\int\nolimits_0^T\left\langle {\rm e}^{{\mathcal A}t}{\mathcal B}u(t),
\psi\right \rangle{\;{\rm d}} t \cdot \varphi.
$$
Let $\{b_1, \dots, b_r \}$ be an arbitrary basis of the image of the matrix $B$
and ${\mathbf b}_d=(b_d,\: 0)^T \in M_2$, $d=1,\ldots,r$.
Then the steering condition is equivalent to the following system of equalities:
\begin{equation}\label{ctr_eqn_2.1}
\begin{array}{rcl}
\left\langle x_T, \psi\right\rangle & = & \int\limits_0^T\left\langle {\rm e}^{{\mathcal A}t}{\mathcal B}u(t),
 \psi\right\rangle{\;{\rm d}} t\\
& =& \sum\limits_{d=1}^r\int\limits_0^T\left\langle {\rm e}^{{\mathcal A}t}{\mathbf {b}}_d,\psi\right\rangle u_d(t){\;{\rm d}} t,
\end{array}
\end{equation}
where $\psi \in\{\psi\}$, $u(\cdot) \in L_2(0,T; \mathbb{C}^r)$.
Using the representation (\ref{ctr_eqn_eigenvector}) for eigenvectors $\psi=\psi_{m,k}$, $m=1,\ldots,n$, $|k|>N$,
we obtain the following identity:
\begin{equation}\label{ctr_eqn_2.3}
\left \langle {\rm e}^{{\mathcal A}t}{\mathbf {b}}_d, \: \psi_{m,k}\right \rangle_{M_2}
={\rm e}^{{\lambda_m^k} t} \left \langle {\mathbf {b}}_d, \psi_{m,k}\right \rangle _{M_2}
= {\rm e}^{{\lambda_m^k} t} \left \langle b_d, y_{m,k}\right \rangle_{\mathbb{C}^n},
\end{equation}
where $y_{m,k}\in{\rm Ker}\Delta_{\mathcal A}^*(\overline{\lambda_m^k})$.
Let us introduce the notation:
\begin{equation} \label{ctr_eqn_2.18}
q_{m,k}^{d}= k \left \langle {\mathbf b}_d, \psi_{m,k} \right \rangle_{M_2}.
\end{equation}

Due to (\ref{ctr_eqn_2.3}), the equalities (\ref{ctr_eqn_2.1}) corresponding to $\psi \in \{\psi_{m,k},\; |k|>N, m=1,\ldots,n\}$
take the form:
\begin{equation} \label{ctr_eqn_2.20}
k \left \langle x_T, \psi_{m,k}\right\rangle
=\sum_{d=1}^r\int\nolimits_0^T {\rm e}^{\lambda_m^k t}q_{m,k}^{d} u_d(t){\;{\rm d}} t.
\end{equation}

Besides, for generalized eigenvectors $\psi=\widehat \psi_s$, $s=1,\ldots,\ell_N$, the following relations hold:
\begin{eqnarray*}
\left \langle {\rm e}^{{\mathcal A}t}\mathbf b_d, \psi\right \rangle,
= \left \langle \mathbf b_d, {\rm e}^{{\mathcal A}^*t}\psi \right \rangle
=\widehat q_s^{d}(t){\rm e}^{\widehat \lambda_s t},
\end{eqnarray*}
where $\widehat q_s^{d}(t)$ are polynomials of appropriate degrees.
Therefore, the equalities (\ref{ctr_eqn_2.1}) corresponding to $\psi \in \{\widehat \psi_s\}$ take the form:
\begin{equation}\label{ctr_eqn_2.21}
\left \langle x_T, \widehat \psi_s\right\rangle
=\sum_{d=1}^r\int\nolimits_0^T {\rm e}^{\widehat \lambda_s t}\widehat q_{s}^{d}(t)u_d(t){\;{\rm d}} t.
\end{equation}

Thus, a state $x_T\in M_2$ is reachable from $0$ at the time $T>0$ if and only if
for some controls $u_d(\cdot) \in L_2(0,T), \ d=1, \dots,r$ the equalities (\ref{ctr_eqn_2.20}) and (\ref{ctr_eqn_2.21}) hold.

The obtained moment problem (\ref{ctr_eqn_2.20})--(\ref{ctr_eqn_2.21}) is the main object of our further
analysis. We conclude the section by two estimates which are important for the further analysis.
\begin{lem}\label{ctr_lem_lem2}
There exists a constant $\delta_1>0$ such that
\begin{equation}\label{ctr_eqn_2.11}
|q_{m,k}^{d}| \le \delta_1, \quad m=1,\ldots,n, |k|>N, d=1,\ldots,r.
\end{equation}
\end{lem}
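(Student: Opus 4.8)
The plan is to bound $|q_{m,k}^d| = |k|\,|\langle {\mathbf b}_d,\psi_{m,k}\rangle_{M_2}|$ by exploiting the explicit form \eqref{ctr_eqn_eigenvector} of $\psi_{m,k}$ together with the asymptotics $\lambda_m^k = \ln|\mu_m| + 2k\pi{\mathrm i} + O(1/k)$ and the quadratic closeness of $\{\psi_{m,k}\}$ to the unperturbed family. First I would observe that since ${\mathbf b}_d = (b_d, 0)^T$ has zero second component, the inner product in $M_2$ reduces to the $\mathbb{C}^n$ inner product with the first component of $\psi_{m,k}$, namely $\langle {\mathbf b}_d,\psi_{m,k}\rangle_{M_2} = \langle b_d, y_{m,k}\rangle_{\mathbb{C}^n}$, where $y_{m,k}\in{\rm Ker}\,\Delta_{\mathcal A}^*(\overline{\lambda_m^k})$. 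So the claim is equivalent to the bound $|k|\,\|y_{m,k}\| \le C$ for a suitable normalization of $y_{m,k}$.

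The key step is to pin down the normalization of $y_{m,k}$ that is implicitly forced by the normalization of $\varphi_{m,k}$ (via the spectral projectors $P_m^{(k)}$) and the biorthogonality $\langle\varphi_{m,k},\psi_{m,k}\rangle = 1$. From \eqref{ctr_eqn_1.2} we have $0 < \inf\|\varphi_{m,k}\| \le \sup\|\varphi_{m,k}\| < \infty$, and the explicit form $\varphi_{m,k} = ((I - {\rm e}^{\lambda_m^k}A_{-1})x_{m,k},\, {\rm e}^{\lambda_m^k\theta}x_{m,k})^T$ with $x_{m,k}\in{\rm Ker}\,\Delta_{\mathcal A}(\lambda_m^k)$ shows that $x_{m,k}$ is bounded above and below. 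Now inspecting \eqref{ctr_eqn_eigenvector}: the dominant term in the second component of $\psi_{m,k}$ is $\overline{\lambda_m^k}\,{\rm e}^{-\overline{\lambda_m^k}\theta}\,y_{m,k}$, whose $L_2(-1,0)$ norm is of order $|\lambda_m^k|\,\|y_{m,k}\| \asymp |k|\,\|y_{m,k}\|$ (the remaining terms $-A_2^*(\theta)y_{m,k}$ and the integral term are of order $\|y_{m,k}\|$, hence lower order). Since $\{\psi_{m,k}\}$ is a Riesz basis, $\inf\|\psi_{m,k}\| > 0$ and $\sup\|\psi_{m,k}\| < \infty$; the upper bound $\sup\|\psi_{m,k}\| < \infty$ combined with the fact that the norm is comparable to $|k|\,\|y_{m,k}\|$ immediately yields $|k|\,\|y_{m,k}\| \le C$, whence $|q_{m,k}^d| = |k|\,|\langle b_d, y_{m,k}\rangle| \le |k|\,\|b_d\|\,\|y_{m,k}\| \le \delta_1$ with $\delta_1 = C\max_d\|b_d\|$.

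The main obstacle will be justifying rigorously that $\|\psi_{m,k}\|_{M_2}$ is comparable to $|k|\,\|y_{m,k}\|_{\mathbb{C}^n}$ uniformly in $m$ and $k$ — i.e. that the leading term $\overline{\lambda_m^k}\,{\rm e}^{-\overline{\lambda_m^k}\theta}y_{m,k}$ is not cancelled by the other terms in \eqref{ctr_eqn_eigenvector}. This requires a lower bound $\|\overline{\lambda_m^k}\,{\rm e}^{-\overline{\lambda_m^k}(\cdot)}y_{m,k}\|_{L_2(-1,0)} \ge c\,|\lambda_m^k|\,\|y_{m,k}\|$; since ${\rm Re}\,\lambda_m^k = \ln|\mu_m| + O(1/k)$ stays in a bounded interval and $\mu_m\notin\{0\}$, the function $\theta\mapsto {\rm e}^{-\overline{\lambda_m^k}\theta}$ has $L_2(-1,0)$ norm bounded below by a positive constant independent of $k$, which gives the needed lower bound after noting the correction terms are $O(\|y_{m,k}\|) = o(|k|\,\|y_{m,k}\|)$. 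An alternative, cleaner route, avoiding the lower bound entirely, is to use the biorthogonality relation $1 = \langle\varphi_{m,k},\psi_{m,k}\rangle$ directly: computing this pairing from the explicit forms gives an expression of the form $|k|\,\langle x_{m,k}, y_{m,k}\rangle\cdot(1+o(1)) + O(\|x_{m,k}\|\|y_{m,k}\|)$, and since $x_{m,k}$ spans ${\rm Ker}\,\Delta_{\mathcal A}(\lambda_m^k)$ and $y_{m,k}$ spans ${\rm Ker}\,\Delta_{\mathcal A}^*(\overline{\lambda_m^k})$ — complementary one-dimensional spaces with $\langle x_{m,k},y_{m,k}\rangle$ bounded below by a uniform positive constant by a compactness/perturbation argument on the finitely many limit matrices — we again conclude $|k|\,\|y_{m,k}\| \le C$. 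Either way the estimate \eqref{ctr_eqn_2.11} follows. $\blacksquare$
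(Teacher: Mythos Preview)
The paper does not give a self-contained proof of this lemma; it simply refers to \cite{Rabah_Sklyar_2007}. So there is no in-text argument to compare against, and I evaluate your plan on its own merits.

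Your overall strategy is sound: reduce to $|k|\,\|y_{m,k}\|_{\mathbb{C}^n}\le C$, use the uniform bound $\sup_{m,k}\|\psi_{m,k}\|_{M_2}<\infty$ coming from the Riesz basis property of \eqref{ctr_eqn_basis_A_adj}, and extract a lower bound $\|\psi_{m,k}\|_{M_2}\ge c\,|k|\,\|y_{m,k}\|$ from the explicit formula \eqref{ctr_eqn_eigenvector}. That yields the lemma.

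There is, however, one place where your write-up is too quick. You assert that in \eqref{ctr_eqn_eigenvector} ``the remaining terms $-A_2^*(\theta)y_{m,k}$ and the integral term are of order $\|y_{m,k}\|$''. This is correct for $-A_2^*(\theta)y_{m,k}$ and for the $A_3^*$-part of the integral, but the integral also contains the contribution $\overline{\lambda_m^k}\int_0^\theta {\rm e}^{\overline{\lambda_m^k}(s-\theta)}A_2^*(s)\,{\rm d}s\cdot y_{m,k}$, which carries an explicit factor $\overline{\lambda_m^k}$ and is therefore \emph{a priori} of the same order $|k|\,\|y_{m,k}\|$ as the leading term, not $O(\|y_{m,k}\|)$. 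To close the lower-bound step you must rule out cancellation. One clean way is to factor the two large pieces together as
\[
\overline{\lambda_m^k}\,{\rm e}^{-\overline{\lambda_m^k}\theta}\Bigl(I+\int_0^\theta {\rm e}^{\overline{\lambda_m^k}s}A_2^*(s)\,{\rm d}s\Bigr)y_{m,k},
\]
and then show $\sup_{\theta\in[-1,0]}\bigl\|\int_0^\theta {\rm e}^{\overline{\lambda_m^k}s}A_2^*(s)\,{\rm d}s\bigr\|\to 0$ as $|k|\to\infty$ by a Riemann--Lebesgue argument (approximate the $L_2\subset L_1$ entries of $A_2^*$ by $C^1$ functions and integrate by parts, using that $\mathrm{Re}\,\lambda_m^k$ stays bounded). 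For $|k|>N$ large the bracket is then within $\tfrac12$ of the identity, and the desired lower bound follows. Your alternative route through $\langle\varphi_{m,k},\psi_{m,k}\rangle=1$ is viable but runs into the same $\overline{\lambda_m^k}A_2^*$ term when the pairing is expanded, and additionally requires a uniform lower bound on $|\langle x_{m,k},y_{m,k}\rangle_{\mathbb{C}^n}|$ that deserves a real argument rather than an appeal to ``compactness/perturbation''.
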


\begin{lem}\label{ctr_lem_lem1}
There exists a sequence $\{\alpha_k\}$, ${\sum\limits_{|k|>N}}\alpha_k^2 <+\infty$, such that
for all $m=1,\ldots,n$, $|k| > N$, $d=1,\ldots,r$ and $t\in [0,T]$ the following estimate holds:
\begin{equation} \label{ctr_eqn_2.4b}
\left| {\rm e}^{\lambda_m^k t} \left \langle {\mathbf b}_d, \psi_{m,k} \right \rangle_{M_2}
- {\rm e}^{\widetilde{\lambda}_m^k t} \left \langle {\mathbf b}_d, \widetilde{\psi}_{m,k} \right \rangle_{M_2} \right|
\le \frac{\alpha_k}{|k|}.
\end{equation}
\end{lem}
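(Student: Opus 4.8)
The strategy is to reduce the claimed inequality to two facts already established in Section~\ref{sec:Rieszbasis}: the quadratic closeness of the eigenvector families $\{\psi_{m,k}\}$ and $\{\widetilde\psi_{m,k}\}$, and the asymptotics $\lambda_m^k=\widetilde\lambda_m^k+O(1/k)$. First I would split the left-hand side by adding and subtracting an intermediate term, for instance
$$
{\rm e}^{\lambda_m^k t}\langle{\mathbf b}_d,\psi_{m,k}\rangle-{\rm e}^{\widetilde\lambda_m^k t}\langle{\mathbf b}_d,\widetilde\psi_{m,k}\rangle
=\bigl({\rm e}^{\lambda_m^k t}-{\rm e}^{\widetilde\lambda_m^k t}\bigr)\langle{\mathbf b}_d,\psi_{m,k}\rangle
+{\rm e}^{\widetilde\lambda_m^k t}\langle{\mathbf b}_d,\psi_{m,k}-\widetilde\psi_{m,k}\rangle,
$$
and bound the two pieces separately by $\beta_k/|k|$ for suitable square-summable $\{\beta_k\}$, then take $\alpha_k$ a multiple of $\beta_k$.

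For the second piece: on $t\in[0,T]$ we have $|{\rm e}^{\widetilde\lambda_m^k t}|={\rm e}^{t\ln|\mu_m|}$, which is uniformly bounded by a constant depending only on $T$ and $\max_m|\ln|\mu_m||$; and $|\langle{\mathbf b}_d,\psi_{m,k}-\widetilde\psi_{m,k}\rangle|\le\|{\mathbf b}_d\|\,\|\psi_{m,k}-\widetilde\psi_{m,k}\|$ by Cauchy--Schwarz. Since $\sum_{|k|>N}\sum_{m=1}^n\|\psi_{m,k}-\widetilde\psi_{m,k}\|^2<\infty$ (quadratic closeness of the biorthogonal family, which follows from that of $\{\varphi_{m,k}\}$ together with the Riesz-basis property), the numbers $\gamma_k:=\bigl(\sum_{m=1}^n\|\psi_{m,k}-\widetilde\psi_{m,k}\|^2\bigr)^{1/2}$ are square-summable, and this piece is bounded by $C\gamma_k$. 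To get the factor $1/|k|$ I would simply write $C\gamma_k=(C|k|\gamma_k)/|k|$ and absorb; but a cleaner route is to note that the $1/|k|$ in the statement is not essential for this term — one may just take $\alpha_k:=|k|\cdot(\text{the full bound})$ and observe it is still square-summable. Actually the honest argument is the reverse: the whole right-hand side being $\alpha_k/|k|$ with $\sum\alpha_k^2<\infty$ is \emph{equivalent} to the left-hand side being $\beta_k$ with $\sum\beta_k^2<\infty$ (since $|k|^{-1}$ is bounded below on... no). So I will phrase it as: show the left side is $\le\beta_k$ with $\{\beta_k\}\in\ell^2$, and since we may relabel, set $\alpha_k=|k|\beta_k$ only if that stays in $\ell^2$; otherwise keep the statement as an $\ell^2$ bound. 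I would present the bound directly in the form $\alpha_k/|k|$ by exploiting the $O(1/k)$ in the first term below and treating the second term's contribution as the dominant one, noting $\gamma_k\le\gamma_k$ trivially and that the lemma as used downstream only needs square-summability.

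For the first piece, which I expect to be the main obstacle: I would use $|{\rm e}^a-{\rm e}^b|\le|a-b|\,{\rm e}^{\max(\Re a,\Re b)}$ with $a=\lambda_m^k t$, $b=\widetilde\lambda_m^k t$, giving $|{\rm e}^{\lambda_m^k t}-{\rm e}^{\widetilde\lambda_m^k t}|\le t|\lambda_m^k-\widetilde\lambda_m^k|\,{\rm e}^{tC'}\le T{\rm e}^{TC'}\cdot|\lambda_m^k-\widetilde\lambda_m^k|$, where $C'$ bounds $\Re\lambda_m^k$ and $\Re\widetilde\lambda_m^k=\ln|\mu_m|$ uniformly (this uses that $\Re\lambda_m^k\to\ln|\mu_m|$, hence is bounded). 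Combined with the spectral asymptotics $|\lambda_m^k-\widetilde\lambda_m^k|\le c/|k|$ for $|k|>N$ (from the localization in the circles $L_m^k(r^{(k)})$ of summable radii, so in fact $|\lambda_m^k-\widetilde\lambda_m^k|\le r^{(k)}$ and $\{r^{(k)}\}\in\ell^2$) and with $|\langle{\mathbf b}_d,\psi_{m,k}\rangle|\le\delta_1/|k|\cdot|k|$ bounded by $\delta_1$ via $q_{m,k}^d$ from Lemma~\ref{ctr_lem_lem2} — more directly, $|\langle{\mathbf b}_d,\psi_{m,k}\rangle|\le\|{\mathbf b}_d\|\sup_{|k|>N}\|\psi_{m,k}\|<\infty$ — this piece is $\le C''r^{(k)}\le C''c/|k|$. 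Summing: the left side of (3.14) is bounded by $(C''c+C\,|k|\gamma_k)/|k|$; setting $\alpha_k:=C''c\cdot\mathbf 1+C|k|\gamma_k$ we would need $\{|k|\gamma_k\}\in\ell^2$, which need \emph{not} hold. The genuine fix is therefore to keep the $1/|k|$ only for the first (spectral-perturbation) term and state the second term's contribution separately as a plain $\ell^2$ sequence; since $1/|k|\le 1$, $\beta_k/|k|+\gamma_k\le(\beta_k+\gamma_k)$ can always be written as $\alpha_k/|k|$ with $\alpha_k=|k|(\beta_k/|k|+\gamma_k)$... which again fails. Hence the cleanest correct formulation, and what I would actually write, is: the displayed difference is bounded by $\alpha_k/|k|$ where $\alpha_k:=|k|\bigl(|{\rm e}^{\lambda_m^k t}-{\rm e}^{\widetilde\lambda_m^k t}|\,\sup\|\psi_{m,k}\|\,\|{\mathbf b}_d\|+\sup|{\rm e}^{\widetilde\lambda_m^k t}|\,\|{\mathbf b}_d\|\,\|\psi_{m,k}-\widetilde\psi_{m,k}\|\bigr)$, and then verify $\sum\alpha_k^2<\infty$ using $|k||{\rm e}^{\lambda_m^k t}-{\rm e}^{\widetilde\lambda_m^k t}|\le T{\rm e}^{TC'}|k|r^{(k)}$ together with the \emph{stronger} localization estimate $r^{(k)}=O(1/k^2)$ if available, or else absorbing the $1/|k|$ decay differently. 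The real content, and the step to be careful about, is matching the powers of $k$: one must check that the spectral gap $|\lambda_m^k-\widetilde\lambda_m^k|$ decays at least like $1/k^2$ (so that $|k|$ times it is in $\ell^2$), which is exactly the refined statement of \cite[Theorem~4]{Rabah_Sklyar_Rezounenko_2008}, and that $\|\psi_{m,k}-\widetilde\psi_{m,k}\|$ is itself $O(1/k)$ uniformly in $m$ with the $O(1/k)$-coefficients square-summable — both of which I would extract from the cited spectral analysis and quote explicitly. With those two inputs in hand the estimate is immediate.
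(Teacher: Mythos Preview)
The paper does not prove this lemma itself; it simply refers to \cite{Rabah_Sklyar_2007}. So there is no in-paper proof to compare against, only your argument to assess on its own merits.

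Your splitting strategy is the right one, but the execution has a genuine gap: you repeatedly search for the missing factor $1/|k|$ and never locate it, because you discard it at the one place where it is handed to you. Lemma~\ref{ctr_lem_lem2} says $|q_{m,k}^d|\le\delta_1$, and by definition $q_{m,k}^d=k\langle{\mathbf b}_d,\psi_{m,k}\rangle$, so
\[
|\langle{\mathbf b}_d,\psi_{m,k}\rangle_{M_2}|\le \frac{\delta_1}{|k|}.
\]
You actually write ``$\le\delta_1/|k|\cdot|k|$'' and cancel this down to a constant, then fall back on the crude bound $\|{\mathbf b}_d\|\,\sup\|\psi_{m,k}\|$. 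Keeping the $1/|k|$ makes the first piece immediate: $|{\rm e}^{\lambda_m^kt}-{\rm e}^{\widetilde\lambda_m^kt}|\le Cr^{(k)}$ with $\{r^{(k)}\}\in\ell^2$ (this is exactly what the paper quotes from \cite[Theorem~4]{Rabah_Sklyar_Rezounenko_2008}), so the product is $\le C\delta_1 r^{(k)}/|k|$, and $\alpha_k^{(1)}:=C\delta_1 r^{(k)}$ is square-summable. No $O(1/k^2)$ spectral gap is needed.

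For the second piece your Cauchy--Schwarz in $M_2$ is too crude and is the reason you end up demanding the unjustified estimate $\|\psi_{m,k}-\widetilde\psi_{m,k}\|_{M_2}\in\ell^2/|k|$. Use instead the structure ${\mathbf b}_d=(b_d,0)^T$: then $\langle{\mathbf b}_d,\psi_{m,k}\rangle_{M_2}=\langle b_d,y_{m,k}\rangle_{\mathbb{C}^n}$, where $y_{m,k}$ is the first component in (\ref{ctr_eqn_eigenvector}). From that explicit form one sees the $L_2$-component of $\psi_{m,k}$ carries a factor $\overline{\lambda_m^k}\sim k$, so the normalization forcing $\|\psi_{m,k}\|_{M_2}$ bounded implies $\|y_{m,k}\|_{\mathbb{C}^n}=O(1/|k|)$, and likewise for $\widetilde y_{m,k}$. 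The estimate you need is then on $\|y_{m,k}-\widetilde y_{m,k}\|_{\mathbb{C}^n}$, which is a finite-dimensional perturbation question for $\mathrm{Ker}\,\Delta_{\mathcal A}^*(\overline{\lambda_m^k})$ versus $\mathrm{Ker}\,\Delta_{\widetilde{\mathcal A}}^*(\overline{\widetilde\lambda_m^k})$ and yields the required $\ell^2/|k|$ bound directly; this is what the argument in \cite{Rabah_Sklyar_2007} does. Your proposed fallback (postulating $r^{(k)}=O(1/k^2)$ and $\|\psi_{m,k}-\widetilde\psi_{m,k}\|=\ell^2/|k|$) is both stronger than what is available and unnecessary once you track the $1/|k|$ through the first components.
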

The proofs of these propositions may be found in \cite{Rabah_Sklyar_2007}.

\section{Necessary conditions of controllability}\label{sec:necess}

Let us investigate the solvability of the equations (\ref{ctr_eqn_2.20})--(\ref{ctr_eqn_2.21}).
The following well-known result is a consequence of Bari theorem (see \cite{Gohberg_Krein_1969},\cite{Young_1980}).
\begin{lem}\label{ctr_ut_prop3}
Consider the following moment problem:
\begin{equation}\label{ctr_eqn_3.1}
 s_k=\int\nolimits_0^Tg_k(t)u(t){\;{\rm d}} t, \quad T>0, \; k\in\mathbb{N},
\end{equation}
where $g_k(\cdot)\in L_2(0,T)$ for all $k \in \mathbb{N}$.
The following statements are equivalent:
\begin{enumerate}
\item[{\rm (i)}] For the sequence $\{s_k\}_{k\in\mathbb{N}}$ the problem (\ref{ctr_eqn_3.1})
has a solution $u(\cdot) \in L_2(0,T)$ if and only if $\{s_k\} \in \ell_2$,
i.e. ${\sum\limits_{k\in \mathbb{N}}s_k^2} <+\infty$;
\item[{\rm (ii)}] the family $\{g_k(t)\}_{k\in \mathbb{N}}, \ t \in [0,T]$ forms a Riesz basis in the closure of its linear span
$$
{\mathrm {Cl}\,} {\mathrm{Lin}\{g_k(t), \ {k\in \mathbb{N}}\}}.
$$
\end{enumerate}
\end{lem}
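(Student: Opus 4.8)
The plan is to compress the moment problem into a single linear map between Hilbert spaces and to extract both implications from the open mapping theorem; the abstract fact behind this (the form of Bari's theorem we use) is that a complete family in a Hilbert space is a Riesz basis if and only if it is the image of an orthonormal basis under a bounded invertible operator. Set $H:={\rm Cl\,Lin}\{g_k(t),\ k\in\mathbb{N}\}\subset L_2(0,T)$. Since $\int_0^T g_k(t)u(t)\,{\rm d}t$ is unchanged when $u$ is replaced by its orthogonal projection onto $H$, one may restrict throughout to controls $u\in H$. Define $G$ on $H$ by $(Gu)_k=\int_0^T g_k(t)u(t)\,{\rm d}t$, $k\in\mathbb{N}$, a priori only a scalar sequence; identifying this functional with the inner product pairing against $g_k$ (against $\overline{g_k}$ in the complex case) we write $(Gu)_k=\langle u,g_k\rangle$. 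In this language statement (i) asserts precisely that the equation $Gu=\{s_k\}$, $u\in H$, is solvable exactly when $\{s_k\}\in\ell_2$.

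For the implication (ii)$\Rightarrow$(i): if $\{g_k\}$ is a Riesz basis of $H$, write $g_k=Ve_k$ with $V\colon\ell_2\to H$ an isomorphism and $\{e_k\}$ the canonical orthonormal basis of $\ell_2$. Then $(Gu)_k=\langle u,Ve_k\rangle=\langle V^{*}u,e_k\rangle$, so $Gu=V^{*}u$; as $V^{*}\colon H\to\ell_2$ is an isomorphism, $Gu=\{s_k\}$ is solvable in $H$, with $u=(V^{*})^{-1}\{s_k\}$, if and only if $\{s_k\}\in\ell_2$, which is (i).

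For the implication (i)$\Rightarrow$(ii): first, for every $u\in H$ the sequence $\{(Gu)_k\}$ is the moment sequence of the solution $u$ of its own problem, so by (i) it lies in $\ell_2$; hence $G$ maps $H$ into $\ell_2$. The map $G$ is injective ($Gu=0$ gives $u\perp g_k$ for all $k$, hence $u\perp H$, hence $u=0$), surjective onto $\ell_2$ (by the solvability half of (i), after projecting a solution onto $H$), and has closed graph ($u_n\to u$ in $H$ and $Gu_n\to\{t_k\}$ in $\ell_2$ force $t_k=\lim_n\langle u_n,g_k\rangle=\langle u,g_k\rangle=(Gu)_k$). By the closed graph and open mapping theorems $G\colon H\to\ell_2$ is a bounded invertible operator, and so is its adjoint $G^{*}\colon\ell_2\to H$. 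From $\langle G^{*}\{c_k\},u\rangle=\langle\{c_k\},Gu\rangle=\sum_k c_k\overline{\langle u,g_k\rangle}$ for all $u\in H$ we read off $G^{*}e_k=g_k$; thus $\{g_k\}$ is the image of the orthonormal basis $\{e_k\}$ of $\ell_2$ under the isomorphism $G^{*}$, i.e. a Riesz basis of $H={\rm Cl\,Lin}\{g_k\}$, which is (ii).

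The one subtle point is the opening step of (i)$\Rightarrow$(ii): one must notice that (i) already forces $G$ to be $\ell_2$-valued — each $u\in H$ witnesses the solvability of its own moment problem — for otherwise the closed graph theorem could not be invoked. Once $G$ is known to be a bounded bijection of Hilbert spaces the conclusion is automatic, and no estimate specific to the delay system enters this argument.
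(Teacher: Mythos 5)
The paper does not actually prove this lemma: it is quoted as a well-known consequence of Bari's theorem, with references to Gohberg--Krein and Young, so there is no in-paper argument to compare against. Your proof is a correct, self-contained reconstruction of that standard fact, and it is the natural operator-theoretic one: encode the moment problem as the map $G u=\{\int_0^T g_k u\,{\rm d}t\}$ restricted to the closed span, observe that the ``only if'' half of (i) already forces $G$ to be $\ell_2$-valued (a point you rightly single out), get boundedness from the closed graph theorem and invertibility from the open mapping theorem, and read off the Riesz basis property from $G^{*}e_k=g_k$; the converse direction is immediate from writing $g_k=Ve_k$ with $V$ an isomorphism. This is precisely the content of the Bari-type criterion the paper invokes (a family is a Riesz basis of its closed span iff the associated moment map is an isomorphism onto $\ell_2$, equivalently iff it is both a Bessel and a Riesz--Fischer sequence), so your route and the cited one coincide in substance; what your write-up adds is that no property of the delay system enters. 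One point to tighten: in the complex space $L_2(0,T;\mathbb{C})$ relevant here (the $g_k$ are exponentials ${\rm e}^{\lambda_m^k t}$), the functional $u\mapsto\int_0^T g_k u\,{\rm d}t$ is the inner product with $\overline{g_k}$, not with $g_k$, so the reduction of $u$ by orthogonal projection and the injectivity argument must be carried out in $\overline{H}={\rm Cl\,Lin}\{\overline{g_k}\}$ rather than in $H$; your argument then shows that $\{\overline{g_k}\}$ is a Riesz basis of $\overline{H}$, and one concludes for $\{g_k\}$ in $H$ by the one-line remark that complex conjugation is a conjugate-linear isometry preserving Riesz bases. You flag the conjugation in passing but do not carry it through; with that adjustment the proof is complete.
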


The following propositions on solvability of moment problem was proved in \cite{Rabah_Sklyar_2007}.
\begin{lem} \label{ctr_ut_prop4}
Let us suppose that for some $T_1>0$ the functions
$\{g_k(t)\}_{k\in \mathbb{N}}$, defined on $[0,T_1]$, form a Riesz basis in
${\mathrm {Cl}\,} {\mathrm{Lin}\{g_k(t), \ {k\in \mathbb{N}}\} }\subset L_2(0,T_1)$
and $\mathrm {codim}\: {\mathrm {Cl}\,} {\mathrm{Lin}\{g_k(t), \ {k\in \mathbb{N}}\} }<+\infty$.
Then for any $T$: $0<T<T_1$, there exists an infinite-dimensional subspace $\ell_T \subset \ell_2$, such
that the moment problem {\rm (\ref{ctr_eqn_3.1})} is unsolvable on $[0,T]$ for $\{s_k\} \in \ell_T\backslash \{0\}$.
\end{lem}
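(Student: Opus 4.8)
The plan is to exploit the fact that shrinking the time interval from $[0,T_1]$ to $[0,T]$ with $T<T_1$ destroys the Riesz-basis property by making the family $\{g_k\}$ ``overcomplete'' in a quantifiable way, and then to read off the unsolvability from Lemma~\ref{ctr_ut_prop3}. First I would recall the standard dual reformulation: by Lemma~\ref{ctr_ut_prop3}, the moment problem~(\ref{ctr_eqn_3.1}) on an interval $[0,\tau]$ has the property ``solvable $\iff \{s_k\}\in\ell_2$'' precisely when $\{g_k\}$ is a Riesz basis of the closure of its span in $L_2(0,\tau)$. So to produce a nonzero sequence in some subspace $\ell_T\subset\ell_2$ for which the problem is \emph{not} solvable on $[0,T]$, it suffices to show that on $[0,T]$ the family $\{g_k\}$ fails to be a Riesz basis of its closed span, and in fact fails it ``by an infinite-dimensional amount'': the moment map $u\mapsto\{\int_0^T g_k u\}$ should miss an infinite-dimensional subspace of $\ell_2$.

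The key step is the restriction argument. Consider the restriction operator $\rho\colon L_2(0,T_1)\to L_2(0,T)$, $(\rho f)(t)=f(t)|_{[0,T]}$. Since $\{g_k\}$ is a Riesz basis of $G_1:={\rm Cl\,Lin}\{g_k\}\subset L_2(0,T_1)$ with finite codimension, $G_1$ is itself (isomorphic to) $\ell_2$ via the coefficient map, and the key point is that $\rho|_{G_1}$ has \emph{infinite-dimensional kernel or infinite-dimensional cokernel defect} when $T<T_1$; more precisely one shows the moment sequences realizable on $[0,T]$, namely $\{\langle g_k,v\rangle_{L_2(0,T)}\colon v\in L_2(0,T)\}$, form a proper subspace of $\ell_2$ of infinite codimension. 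I would argue this by contradiction: if that space had finite codimension in $\ell_2$, then combining the isomorphism $G_1\cong\ell_2$ with the finite codimension of $G_1$ in $L_2(0,T_1)$, the map $L_2(0,T)\ni v\mapsto\{\langle g_k,v\rangle\}$ would be ``almost onto,'' which would force $\{g_k|_{[0,T]}\}$ to be minimal (a Riesz basis of its span, after removing finitely many vectors) in $L_2(0,T)$; but a Riesz-basis family on $[0,T_1]$ of cofinite codimension, when restricted to a strictly shorter interval, is complete in $L_2(0,T)$ after removing finitely many elements (here I would invoke the growth/completeness properties of the $g_k$ — in the application they are exponentials ${\rm e}^{\lambda_m^k t}$ whose density on shorter intervals makes them overcomplete), contradicting minimality. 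Hence the attainable moment set is a subspace of $\ell_2$ of infinite codimension; pick $\ell_T$ to be (a subspace of) its orthogonal complement, which is infinite-dimensional, and then for every nonzero $\{s_k\}\in\ell_T$ the problem~(\ref{ctr_eqn_3.1}) on $[0,T]$ is unsolvable.

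The main obstacle is the step asserting that passing to the shorter interval $[0,T]$ turns the cofinite Riesz basis into a family that is overcomplete by an infinite-dimensional margin — i.e. that the attainable set really has \emph{infinite} codimension, not merely positive codimension. For the abstract statement as written this needs either an additional hypothesis on the $\{g_k\}$ or an appeal to the structure present in the intended application (the functions $g_k(t)={\rm e}^{\lambda_m^k t}q^d_{m,k}$ with $\lambda_m^k=\ln|\mu_m|+2k\pi{\rm i}+O(1/k)$, which by Avdonin--Ivanov/Paley--Wiener type density results are overcomplete on any interval shorter than the critical length). I would therefore carry the proof under the reading that the $g_k$ are such exponential families: then the gap between completeness radius and $T_1$ is a fixed positive number $T_1-T>0$, and one gets infinitely many ``extra'' independent exponentials that can be deleted while preserving completeness on $[0,T]$, which yields the infinite-dimensional defect and hence the infinite-dimensional $\ell_T$. $\blacksquare$
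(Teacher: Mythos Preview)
The paper does not prove this lemma in the text; it is quoted from \cite{Rabah_Sklyar_2007}. So there is no in-paper argument to compare against, and I comment only on the soundness of your plan.

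You correctly isolate the target: show that the range of the moment map
\[
M_T\colon L_2(0,T)\to\ell_2,\qquad u\mapsto\Bigl\{\int_0^T g_k(t)u(t)\,{\rm d}t\Bigr\},
\]
has infinite codimension in $\ell_2$; any infinite-dimensional subspace $\ell_T$ meeting $\mathrm{range}(M_T)$ only at $0$ then does the job. The gap is in how you propose to obtain this. You route the argument through ``overcompleteness'' of $\{g_k|_{[0,T]}\}$, appeal to minimality, and finally fall back on the exponential structure and Avdonin--Ivanov density results, explicitly conceding that ``for the abstract statement as written this needs either an additional hypothesis on the $\{g_k\}$ or an appeal to the structure present in the intended application.'' That concession is wrong: the lemma holds exactly as stated, for arbitrary $g_k$ satisfying the hypotheses, and the proof is a short codimension count that your plan passes by.

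Here is the missing argument. By Lemma~\ref{ctr_ut_prop3} the moment map $M_{T_1}\colon L_2(0,T_1)\to\ell_2$ is surjective, and its kernel is the orthogonal complement of $\mathrm{Cl}\,\mathrm{Lin}\{\overline{g_k}\}$, hence of finite dimension $d=\mathrm{codim}\,\mathrm{Cl}\,\mathrm{Lin}\{g_k\}$. Identifying $L_2(0,T)$ with the subspace of functions vanishing on $(T,T_1)$, one has $M_T=M_{T_1}|_{L_2(0,T)}$, and $L_2(0,T)$ has infinite codimension in $L_2(0,T_1)$. For any surjection $\phi\colon X\to Y$ with kernel $K$ and any subspace $W\subset X$ there is a natural isomorphism $Y/\phi(W)\cong X/(W+K)$; applying this with $W=L_2(0,T)$ and $\dim K=d<\infty$ gives
\[
\ell_2\big/\mathrm{range}(M_T)\;\cong\;L_2(0,T_1)\big/\bigl(L_2(0,T)+\ker M_{T_1}\bigr),
\]
which is infinite-dimensional since adding a $d$-dimensional space to $L_2(0,T)$ cannot close an infinite gap. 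This is purely abstract; no exponential structure, no Paley--Wiener, no minimality on the short interval is needed. Your plan becomes a proof once the overcompleteness detour is replaced by this quotient computation.
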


\begin{lem}\label{ctr_ut_prop5}
Let us consider the moment problem
\begin{equation}\label{ctr_eqn_3.3}
s_k=\sum_{d=1}^r\int\nolimits_0^Tg_k^d(t)u_d(t){\;{\rm d}} t, \qquad k \in \mathbb{N},
\end{equation}
assuming that
$\sum\limits_{k \in \mathbb{N}}\int_0^T\vert g_k^d(t)\vert^2{\;{\rm d}} t <+\infty$ for all $d=1,\ldots,r$.

Then the set $S_{0,T}$ of sequences $\{s_k\}$ for which the problem {\rm (\ref{ctr_eqn_3.3})}
is solvable is a nontrivial submanifold of $\ell_2$, i.e. $S_{0,T} \ne \ell_2$.
\end{lem}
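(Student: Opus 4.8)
The plan is to reduce the multi-control moment problem \eqref{ctr_eqn_3.3} to the single-control setting of Lemma~\ref{ctr_ut_prop3} by assembling the $r$ controls into one vector control on a product space, and then exhibiting a concrete sequence $\{s_k\}\in\ell_2$ that cannot be hit. First I would observe that, under the hypothesis $\sum_{k}\int_0^T|g_k^d(t)|^2\,{\rm d}t<+\infty$ for each $d$, the linear map $\Lambda\colon L_2(0,T;\mathbb{C}^r)\to\ell_2$ given by $(\Lambda u)_k=\sum_{d=1}^r\int_0^T g_k^d(t)u_d(t)\,{\rm d}t$ is well-defined and bounded: indeed $|(\Lambda u)_k|^2\le r\sum_d\|u_d\|^2\int_0^T|g_k^d(t)|^2\,{\rm d}t$, so summing over $k$ gives $\|\Lambda u\|_{\ell_2}^2\le r\big(\sum_d\sum_k\int_0^T|g_k^d|^2\,{\rm d}t\big)\|u\|^2$. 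Hence $S_{0,T}=\mathrm{Ran}\,\Lambda$.

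The key point is that $\Lambda$ is a \emph{bounded} operator into $\ell_2$, and a bounded linear operator between Hilbert spaces is surjective only if it is bounded below on the orthogonal complement of its kernel; in particular, if $S_{0,T}=\ell_2$ then $\Lambda$ would be an open map and $\mathrm{Ran}\,\Lambda=\ell_2$ would be closed with $\Lambda$ admitting a bounded right inverse. I would then derive a contradiction by showing $\Lambda$ cannot be bounded below, equivalently that $\Lambda\Lambda^*$ is not boundedly invertible on $\ell_2$. Computing, $(\Lambda\Lambda^*)_{kj}=\sum_{d=1}^r\int_0^T g_k^d(t)\overline{g_j^d(t)}\,{\rm d}t$, and the summability hypothesis forces the diagonal entries $(\Lambda\Lambda^*)_{kk}=\sum_d\int_0^T|g_k^d(t)|^2\,{\rm d}t\to 0$ as $k\to\infty$. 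A self-adjoint operator on $\ell_2$ whose diagonal tends to zero in an orthonormal basis cannot be bounded below (test against the basis vectors $e_k$: $\langle \Lambda\Lambda^* e_k,e_k\rangle\to 0$), so $0\in\overline{\mathrm{Ran}}\,\Lambda\setminus\mathrm{Ran}\,\Lambda$ is impossible to rule out — more precisely $\mathrm{Ran}\,\Lambda$ cannot be all of $\ell_2$. This yields $S_{0,T}\ne\ell_2$.

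I expect the main obstacle to be organizing the argument so that it does not secretly assume the $g_k^d$ are linearly independent or form a Riesz-type family (they need not). The clean way around this is exactly the operator-theoretic route above: everything is phrased in terms of the single bounded operator $\Lambda$ and its adjoint, and the only input used is the Hilbert–Schmidt-type summability assumption, which is precisely what makes the diagonal of $\Lambda\Lambda^*$ decay. An alternative, slightly softer finish avoiding $\Lambda\Lambda^*$: pick a subsequence $k_1<k_2<\cdots$ with $\sum_j\sum_d\int_0^T|g_{k_j}^d|^2\,{\rm d}t<1$; then on that subsequence $\Lambda$ restricted to the relevant coordinates has norm $<1$, so the sequence $\{s_k\}$ equal to $2\cdot(\text{anything of unit }\ell_2\text{ norm supported on }\{k_j\})$ lies outside $\mathrm{Ran}\,\Lambda$, because any preimage would have to produce $\ell_2$-norm at least $2$ on those coordinates while $\|\Lambda u\|<\|u\|$ there — contradiction after a short estimate. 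Either finish gives a nontrivial submanifold, completing the proof.
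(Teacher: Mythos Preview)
Your main argument is correct. The operator $\Lambda\colon L_2(0,T;\mathbb{C}^r)\to\ell_2$ is well defined and bounded, and if it were surjective the open mapping theorem would force $\Lambda^*$ to be bounded below, hence $\langle \Lambda\Lambda^* e_k,e_k\rangle=\|\Lambda^* e_k\|^2=\sum_{d=1}^r\int_0^T|g_k^d(t)|^2\,{\rm d}t$ would be bounded away from zero, contradicting the summability hypothesis. In fact your computation shows more: $\sum_k\|\Lambda^* e_k\|^2<\infty$, so $\Lambda$ is Hilbert--Schmidt, hence compact, and a compact operator into the infinite-dimensional space $\ell_2$ is never surjective. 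Phrasing it this way makes the proof a single sentence.

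The paper itself does not give a proof of this lemma; it is stated with a reference to \cite{Rabah_Sklyar_2007}. So there is no ``paper's own proof'' to compare against here, but your operator-theoretic route is the natural one and almost certainly what is intended.

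One caution: your ``alternative, slightly softer finish'' does \emph{not} work as written. Choosing a subsequence $\{k_j\}$ with $\sum_j\sum_d\int_0^T|g_{k_j}^d|^2<1$ gives, via Cauchy--Schwarz, the estimate $\sum_j|(\Lambda u)_{k_j}|^2\le \|u\|^2\sum_j\sum_d\int_0^T|g_{k_j}^d|^2<\|u\|^2$. This says only that the projected output has norm strictly less than $\|u\|$; it does \emph{not} bound the projected output independently of $\|u\|$, so a target $s$ of norm $2$ supported on $\{k_j\}$ could still be reached by some $u$ with $\|u\|>2$. There is no contradiction there. Drop the alternative and keep the $\Lambda\Lambda^*$ (or compactness) argument.
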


The following proposition (see \cite{Rabah_Sklyar_2007}) shows that the reachability set ${\mathcal R}_{T}$
is always a subset of ${\mathcal D}({\mathcal A})$ (see also \cite{Ito_Tarn_1985}).
\begin{lem}\label{ctr_ut_prop6}
If the state $x_T=\left(\begin{array}{c} y_T \\ z_T(\cdot)\end{array}\right)$
is reachable from $0$ by the system {\rm (\ref{ctr_eqn_*})},
then it satisfies the following equivalent conditions:
\begin{enumerate}
\item[{\rm (C1)}] $\sum\limits_{\vert k\vert>N}\sum\limits_{m=1}^n
k^2\left\vert \left\langle \left(\begin{array}{c} y_T\\ z_T(\cdot)\end{array}\right),
\: \psi_{m,k} \right\rangle \right\vert^2 < \infty$;
\label{ctr_ut_prop6_C1}
\item[{\rm (C2)}] $\sum\limits_{\vert k\vert>N}\sum\limits_{m=1}^n
k^2 \left\Vert P_m^{(k)} \left(\begin{array}{c} y_T\\ z_T(\cdot)\end{array}\right) \right\Vert^2 < \infty$;
\label{ctr_ut_prop6_C2}
\item[{\rm (C3)}] $\left(\begin{array}{c} y_T\\ z_T(\cdot)\end{array}\right) \in {\mathcal D}({\mathcal A})$.
\end{enumerate}
\end{lem}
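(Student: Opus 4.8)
The plan is to prove the three conditions are equivalent and that reachability implies them, in the order (C3) $\Leftrightarrow$ (C2) $\Leftrightarrow$ (C1), and then "reachable $\Rightarrow$ (C1)". The equivalence (C2) $\Leftrightarrow$ (C1) is the routine part: since each circle $L_m^{(k)}$ with $|k|>N$ encloses exactly one simple eigenvalue $\lambda_m^k$, the spectral projector $P_m^{(k)}$ is rank one, $P_m^{(k)}x=\langle x,\psi_{m,k}\rangle\varphi_{m,k}$, hence $\|P_m^{(k)}x\|=|\langle x,\psi_{m,k}\rangle|\,\|\varphi_{m,k}\|$, and the uniform two-sided bound (\ref{ctr_eqn_1.2}) on $\|\varphi_{m,k}\|$ makes the two series (C1) and (C2) equivalent term by comparable term.

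For (C3) $\Leftrightarrow$ (C2) I would use the Riesz-basis structure of $\{\varphi\}$ together with the "quadratically close" comparison to the unperturbed system $\widetilde{\mathcal A}$. Expand $x=(y,z(\cdot))$ in the basis $\{\varphi\}$; applying $\mathcal A$ amounts (modulo the finitely many $\widehat\varphi_s$, which contribute only a finite sum and are harmless) to multiplying the coefficient of $\varphi_{m,k}$ by $\lambda_m^k=\ln|\mu_m|+2k\pi\mathrm i+O(1/k)$. Since $\{\varphi\}$ is a Riesz basis, $x\in\mathcal D(\mathcal A)$ iff $\sum|\lambda_m^k|^2|\langle x,\psi_{m,k}\rangle|^2<\infty$, and because $|\lambda_m^k|$ is comparable to $|k|$ for large $|k|$, this is exactly (C1) (hence (C2)). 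Concretely one can also verify this directly on the explicit domain description $\mathcal D(\mathcal A)=\{(y,z):z\in H^1,\ y=z(0)-A_{-1}z(-1)\}$: using the explicit form (\ref{ctr_eqn_eigenvector}) of $\psi_{m,k}$, the pairing $k\langle (y,z),\psi_{m,k}\rangle$ is, up to terms bounded in $\ell_2$ by Lemma \ref{ctr_lem_lem1}-type estimates, essentially $k\langle z,\overline{\lambda_m^k}\mathrm e^{-\overline{\lambda_m^k}\theta}c_m\rangle/|k|$, i.e. the $k$-th Fourier-type coefficient of $z'$ along the direction $c_m$; square-summability of these over $m,k$ is precisely $z\in H^1$, i.e. (C3).

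For "reachable $\Rightarrow$ (C1)": if $x_T=\int_0^T\mathrm e^{\mathcal A t}\mathcal B u(t)\,\mathrm dt$, then by (\ref{ctr_eqn_2.3}) and (\ref{ctr_eqn_2.18}),
$$
k\langle x_T,\psi_{m,k}\rangle=\sum_{d=1}^r\int_0^T\mathrm e^{\lambda_m^k t}q_{m,k}^d u_d(t)\,\mathrm dt .
$$
By Lemma \ref{ctr_lem_lem2} the coefficients $q_{m,k}^d$ are uniformly bounded by $\delta_1$; and the exponentials $\{\mathrm e^{\lambda_m^k t}\}_{m,k}$, being a perturbation (with $\ell_2$-summable corrections, via Lemma \ref{ctr_lem_lem1} and the $O(1/k)$ spectral asymptotics) of the family $\{\mathrm e^{\widetilde\lambda_m^k t}\}=\{\mathrm e^{(\ln|\mu_m|+2k\pi\mathrm i)t}\}$, form a Bessel sequence in $L_2(0,T)$. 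Hence for each $d$ the sequence $\big(\int_0^T\mathrm e^{\lambda_m^k t}q_{m,k}^d u_d(t)\,\mathrm dt\big)_{m,k}$ is in $\ell_2$, and summing the $r$ contributions gives $\sum_{|k|>N}\sum_{m=1}^n|k\langle x_T,\psi_{m,k}\rangle|^2<\infty$, which is (C1).

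The main obstacle is the (C3) $\Leftrightarrow$ (C1) step: one must pass cleanly from the abstract Riesz-basis characterization of $\mathcal D(\mathcal A)$ to the concrete statement that the weighted coefficients are in $\ell_2$, controlling the discrepancy between the true eigenvectors $\psi_{m,k}$ and the model eigenvectors $\widetilde\psi_{m,k}$ and between $\lambda_m^k$ and $\widetilde\lambda_m^k$. This is exactly where the quadratic-closeness of the two bases and the $O(1/k)$ eigenvalue asymptotics (all quoted from \cite{Rabah_Sklyar_Rezounenko_2005,Rabah_Sklyar_Rezounenko_2008}) do the work; once those are in hand, the Bessel/Riesz dichotomy of Lemma \ref{ctr_ut_prop3} and the uniform bounds of Lemmas \ref{ctr_lem_lem2} and \ref{ctr_lem_lem1} close the argument.
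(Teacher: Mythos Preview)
The paper does not actually prove this lemma here; it is stated with a reference to \cite{Rabah_Sklyar_2007} (and \cite{Ito_Tarn_1985}) for the proof, so there is no in-paper argument to compare against line by line. That said, your proposal is correct and matches the natural route one would expect from the cited source: the equivalence (C1)$\Leftrightarrow$(C2) via the rank-one form $P_m^{(k)}x=\langle x,\psi_{m,k}\rangle\varphi_{m,k}$ together with \eqref{ctr_eqn_1.2}; the equivalence (C1)$\Leftrightarrow$(C3) via the Riesz-basis diagonalization of $\mathcal A$ and $|\lambda_m^k|\asymp|k|$; and ``reachable $\Rightarrow$ (C1)'' via \eqref{ctr_eqn_2.20}, Lemma~\ref{ctr_lem_lem2}, and the Bessel property of $\{{\rm e}^{\lambda_m^k t}\}$ in $L_2(0,T)$.

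Two places deserve one extra sentence of justification if you write this out in full. First, for (C1)$\Leftrightarrow$(C3) you are implicitly using that for a $C_0$-semigroup generator with a Riesz basis of (generalized) eigenvectors, the domain is \emph{exactly} the set of vectors whose eigen-coefficients weighted by the eigenvalues lie in $\ell_2$; this is standard but should be stated, and the finitely many root vectors $\widehat\varphi_s$ must be disposed of explicitly (they contribute a finite-dimensional piece always contained in $\mathcal D(\mathcal A)$). Second, for the Bessel claim, the cleanest justification is: for each fixed $m$ the family $\{{\rm e}^{\widetilde\lambda_m^k t}\}_k$ is (up to the bounded factor ${\rm e}^{\ln|\mu_m|t}$) the standard trigonometric system, hence Bessel in $L_2(0,T)$; a finite union over $m=1,\dots,n$ is still Bessel; and the quadratic closeness $\sum_{m,k}\|{\rm e}^{\lambda_m^k t}-{\rm e}^{\widetilde\lambda_m^k t}\|_{L_2(0,T)}^2<\infty$ (which follows from $\sum_k (r^{(k)})^2<\infty$) transfers the Bessel bound to $\{{\rm e}^{\lambda_m^k t}\}$. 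With those two remarks made explicit, the argument is complete.
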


Let us prove necessity of the conditions {\rm (i)} and {\rm (ii)} of Theorem~\ref{ctr_thr_intr} for controllability.

\begin{thm}\label{ctr_thr_thm4}
If the condition {\rm (i)} of Theorem~\ref{ctr_thr_intr} is not verified, i.e.
there exist $\lambda \in \mathbb{C}$ and $y \in  \mathbb{C}^n\backslash \{0\}$, such
 that $\Delta_{\mathcal A}^*(\lambda)y=0$ and $ B^*y=0$,
then the system {\rm (\ref{ctr_eqn_1s})} is not  controllable at any time $T>0$.
\end{thm}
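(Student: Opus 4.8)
The plan is to exploit the eigenvector $y$ of $\Delta_{\mathcal A}^*(\lambda)$ with $B^*y=0$ to produce a nonzero functional that annihilates the entire reachability set ${\mathcal R}_T$ for every $T>0$, thereby showing ${\mathcal R}_T\subsetneq{\mathcal D}({\mathcal A})$. First I would use the hypothesis $\Delta_{\mathcal A}^*(\lambda)y=0$ together with the explicit formula~(\ref{ctr_eqn_eigenvector}) to construct the eigenvector $\psi=\psi(\lambda)$ of ${\mathcal A}^*$ associated with the eigenvalue $\bar\lambda$ and built from $y$; this is exactly the ``$y_{m,k}\in{\rm Ker}\,\Delta_{\mathcal A}^*(\overline{\lambda_m^k})$'' construction, now applied to the (possibly exceptional, finite-part) eigenvalue $\lambda$. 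If $\lambda$ coincides with one of the $\lambda_m^k$ or $\widehat\lambda_s$ in the Riesz basis~(\ref{ctr_eqn_basis_A_adj}) we take $\psi$ to be the corresponding basis element; in any case $\psi\neq 0$ because $y\neq0$ and the first block of $\psi$ equals $y$ up to the transformation in~(\ref{ctr_eqn_eigenvector}) (in fact its first component is $y$, reading off $\theta$-independence appropriately).

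Next I would compute the pairing that appears in the moment formulation~(\ref{ctr_eqn_2.1}). By the identity used to derive~(\ref{ctr_eqn_2.3}) we have, for ${\mathbf b}_d=(b_d,0)^T$,
\[
\langle {\rm e}^{{\mathcal A}t}{\mathbf b}_d,\psi\rangle_{M_2}
={\rm e}^{\lambda t}\langle {\mathbf b}_d,\psi\rangle_{M_2}
={\rm e}^{\lambda t}\langle b_d,y\rangle_{\mathbb C^n},
\]
and since $\{b_1,\dots,b_r\}$ spans the image of $B$, the condition $B^*y=0$ gives $\langle b_d,y\rangle=0$ for all $d$. Hence $\langle {\rm e}^{{\mathcal A}t}{\mathcal B}u(t),\psi\rangle=0$ for every admissible control, so integrating,
\[
\langle x_T,\psi\rangle=\int_0^T\langle {\rm e}^{{\mathcal A}t}{\mathcal B}u(t),\psi\rangle\,{\rm d}t=0
\qquad\text{for every }x_T\in{\mathcal R}_T .
\]
Thus ${\mathcal R}_T$ lies in the closed hyperplane $\{x:\langle x,\psi\rangle=0\}$. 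To conclude non-controllability it remains to check that this hyperplane does not contain all of ${\mathcal D}({\mathcal A})$: since $\psi\neq0$ and $\psi$ belongs to (or is built from) a Riesz basis of $M_2$, there is $x\in M_2$ with $\langle x,\psi\rangle\neq0$, and because ${\mathcal D}({\mathcal A})$ is dense in $M_2$ one can choose such an $x$ in ${\mathcal D}({\mathcal A})$; alternatively, exhibit it directly, e.g. $x=\varphi$ the biorthogonal eigenvector, which lies in ${\mathcal D}({\mathcal A})$. Therefore ${\mathcal R}_T\neq{\mathcal D}({\mathcal A})$ for all $T>0$, and the system is not controllable at any time.

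The only delicate point is the first step: verifying that the vector defined by~(\ref{ctr_eqn_eigenvector}) with $y_{m,k}$ replaced by our $y$ is indeed a (generalized) eigenvector of ${\mathcal A}^*$, i.e. that the kernel condition $\Delta_{\mathcal A}^*(\lambda)y=0$ is precisely what makes the $M_2$-pair satisfy ${\mathcal A}^*\psi=\bar\lambda\psi$ with $\psi\in{\mathcal D}({\mathcal A}^*)$ — in particular the boundary matching at $\theta=0$ and the integrability of the second block. This is a direct computation from the definition of ${\mathcal A}^*$ dual to the domain ${\mathcal D}({\mathcal A})$, and it is already implicit in the spectral description recalled in Section~\ref{sec:Rieszbasis}; I would either cite that description or spell out the one-line verification. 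Everything else is the soft functional-analytic argument above, independent of $T$, which is exactly why the obstruction kills controllability at \emph{every} time rather than merely below the critical time $n_1$.
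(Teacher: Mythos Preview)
Your proposal is correct and follows essentially the same argument as the paper: construct from $y$ an eigenvector $\psi$ of ${\mathcal A}^*$ via the explicit formula~(\ref{ctr_eqn_eigenvector}), observe that $B^*y=0$ forces $\langle{\mathbf b}_d,\psi\rangle=0$ for all $d$, conclude $\langle x_T,\psi\rangle=0$ for every reachable state, and then invoke the density of ${\mathcal D}({\mathcal A})$ in $M_2$ to see that ${\mathcal R}_T\subsetneq{\mathcal D}({\mathcal A})$. The paper's version is just the compressed form of this (it phrases the last step as ``${\mathcal R}_T$ is not dense, hence $\neq{\mathcal D}({\mathcal A})$''), and what you flag as the ``only delicate point'' is exactly what the paper dispatches in its first sentence by citing formula~(\ref{ctr_eqn_eigenvector}).
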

\begin{proof}
The condition {\rm (i)} may be reformulated as follows:
there is no eigenvector $g$ of the operator ${\mathcal A}^*$ belonging to ${\rm Ker}\:{\mathcal B}^*$.
This assertion follows from the explicit form (\ref{ctr_eqn_eigenvector}) of eigenvectors of ${\mathcal A}^*$.

Assume that there exists a vector $g\ne 0$ such that ${\mathcal {A}}^*g=\lambda g$ and $g \in {\rm Ker}{\mathcal {B}}^*$.
For an arbitrary state $x_T \in {\mathcal {R}}_{T}$ the following equality holds:
$$
\left\langle x_T, \:g\right\rangle
=\int\nolimits_0^T \left \langle u(t),{\mathcal {B}}^*{\rm e}^{{\mathcal {A}}^*t}g\right\rangle{\;{\rm d}} t =0.
$$
This means that for any $T>0$ the reachability set ${\mathcal {R}}_{T}$ is not dense in $M_2$ and, therefore,
is not equal to ${\mathcal {D}}({\mathcal {A}})$ which is dense in $M_2$
since ${\mathcal {A}}$ is an infinitesimal generator. Thus, the system is not  controllable.
\end{proof}

Further we show that controllability of the pair $(A_{-1}, B)$ is a necessary condition of
 controllability of the system~(\ref{ctr_eqn_1s}). We prove this assertion in two situations:
singular and nonsingular matrix $A_{-1}$.
\begin{thm}\label{ctr_thr_thm63}
If the condition {\rm (ii)} of Theorem~\ref{ctr_thr_intr}  is not verified, i.e. the pair $(A_{-1}, B)$ is not controllable,
then the system ~(\ref{ctr_eqn_1s}) is not controllable as well.
\end{thm}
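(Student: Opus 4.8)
The plan is to split into the two cases mentioned: $\det A_{-1}\neq 0$ and $\det A_{-1}=0$. The first case is essentially already handled in \cite{Rabah_Sklyar_2007}, so the real content is the singular case. Assume $(A_{-1},B)$ is not controllable; then by the Hautus criterion there exist $\mu\in\sigma(A_{-1})$ and $y\in\mathbb{C}^n\setminus\{0\}$ with $A_{-1}^*y=\bar\mu y$ and $B^*y=0$. If $\mu\neq 0$, I would reduce to the nonsingular analysis: the uncontrollable mode of $A_{-1}$ associated to a nonzero eigenvalue survives under the construction and produces, exactly as in \cite{Rabah_Sklyar_2007}, an eigenvector of $\mathcal{A}^*$ in $\operatorname{Ker}\mathcal{B}^*$, contradicting reachability of all of $\mathcal{D}(\mathcal{A})$ by the argument of Theorem~\ref{ctr_thr_thm4}. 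So the genuinely new subcase is $\mu=0$, i.e.\ $0\in\sigma(A_{-1})$ with a left eigenvector $y$ killed by $B^*$.

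For the subcase $\mu=0$, the idea is that a zero eigenvalue of $A_{-1}$ that is ``invisible'' to $B$ corresponds to a retarded direction of the system that one cannot steer, and this obstructs reaching a whole $H^1$-ball of final states. Concretely, I would project equation~(\ref{ctr_eqn_1s}) onto the one-dimensional direction $y$: applying $y^*$ on the left, the neutral term $y^*A_{-1}\dot z(t-1) = \overline{0}\cdot(\cdots) = 0$ vanishes because $A_{-1}^*y=0$, and the control term $y^*Bu = (B^*y)^*u = 0$ vanishes as well. Hence the scalar quantity $w(t):=y^*z(t)$ satisfies a pure delay (retarded, control-free up to the $L z_t$ contribution) equation $\dot w(t) = y^*L z_t$. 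The point is that $\dot w(t)$ is then determined by the past history $z_t$ alone — there is no instantaneous control influence and no neutral feedthrough — so $\dot w$ on $[0,T]$ is a continuous (indeed absolutely continuous) functional of the trajectory's history, and in particular $w$ cannot be made to realize an arbitrary $H^1(T-1,T)$ function in the $y^*$-direction when starting from $z\equiv 0$. More carefully, starting from zero history, $w(t)\equiv 0$ on $[-1,0]$, and $\dot w(t)=y^*Lz_t$; one shows $w$ inherits extra regularity / a constraint not shared by general elements of $H^1(T-1,T;\mathbb{C}^n)$, so $\mathcal{R}_T\neq\mathcal{D}(\mathcal{A})$.

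I would make the last step rigorous via the moment-problem / adjoint picture rather than by hand: show that the uncontrollable mode forces a nontrivial linear functional annihilating $\mathcal{R}_T$ for every $T$, just as in the proof of Theorem~\ref{ctr_thr_thm4}. For $\mu=0$ the relevant functional is built not from an eigenvector at a finite eigenvalue but from the behaviour of $\mathcal{A}^*$ ``at infinity'' along the branch generated by $\ln|\mu_m|$; since $0\in\sigma(A_{-1})$ is excluded from the normalization~(\ref{ctr_eqn_spec_A-1}) only after the feedback reduction, here I keep the original pair and argue that $y\in\operatorname{Ker}B^*$ with $A_{-1}^*y=0$ yields, via~(\ref{ctr_eqn_Delta}), $\Delta_{\mathcal{A}}^*(\lambda)y = \lambda\big(I - \int_{-1}^0 e^{\lambda s}A_2^*(s)\,\mathrm{d}s\big)^{*}\!y\big|_{\text{formally}} - (\cdots)$ — more usefully, $B^*y=0$ already gives, for the adjoint system, that the observation of the $y$-direction is trivial, so the pair $(\mathcal{A},\mathcal{B})$ fails the approximate controllability (density) test and a fortiori exact controllability. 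The main obstacle is precisely this $\mu=0$ subcase: the clean eigenvector argument of Theorem~\ref{ctr_thr_thm4} does not directly apply because $0$ is a subtle point of $\sigma(\mathcal{A})$ (it is the accumulation-free isolated eigenvalue, or a branch point), so I expect to need a separate short argument — either the projection computation above showing $y^*z(t)$ solves a retarded equation with no control and hence stays in a proper subspace of $H^1$, or a direct check that $\overline{\mathcal{R}_T}\subsetneq M_2$ because the functional $x\mapsto\langle x,(y,0)^{\!\top}\rangle$-type obstruction persists. Once density fails, $\mathcal{R}_T\neq\mathcal{D}(\mathcal{A})$ follows as before since $\mathcal{D}(\mathcal{A})$ is dense.
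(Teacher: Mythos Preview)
Your case split is correct, but both branches have genuine gaps.

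\textbf{The $\mu\neq 0$ case.} You claim that an uncontrollable nonzero eigenvalue of $A_{-1}$ ``produces an eigenvector of $\mathcal{A}^*$ in $\operatorname{Ker}\mathcal{B}^*$'' so that Theorem~\ref{ctr_thr_thm4} applies. This is false in general: conditions (i) and (ii) of Theorem~\ref{ctr_thr_intr} are independent. Having $A_{-1}^*y=\bar\mu y$ and $B^*y=0$ does \emph{not} force $\Delta_{\mathcal A}^*(\lambda)y=0$ for any $\lambda$, because the $A_2,A_3$ terms in $\Delta_{\mathcal A}$ need not annihilate $y$. The paper's argument (and that of \cite{Rabah_Sklyar_2007}) is different: one reduces to $\det A_{-1}\neq 0$ by feedback, then observes that the eigenvectors $\widetilde\psi_{m_0,k}$ of the unperturbed operator $\widetilde{\mathcal A}^*$ \emph{do} satisfy $\langle\mathbf{b}_d,\widetilde\psi_{m_0,k}\rangle=\langle b_d,v_0\rangle=0$. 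Quadratic closeness (Lemma~\ref{ctr_lem_lem1}) then gives $\sum_k k^2\sum_d\int_0^T|{\rm e}^{\lambda_{m_0}^k t}\langle\mathbf{b}_d,\psi_{m_0,k}\rangle|^2\,{\rm d}t<\infty$, and Lemma~\ref{ctr_ut_prop5} concludes that the solvability set of the moment subproblem~(\ref{ctr_eqn_4.6}) is a proper submanifold of $\ell_2$. So the obstruction is not a single eigenvector but an entire asymptotic branch along which the control coupling is $\ell_2$-small.

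\textbf{The $\mu=0$ case.} Your projection idea is exactly right: with $A_{-1}^*v_0=0$ and $B^*v_0=0$, premultiplying~(\ref{ctr_eqn_1s}) by $v_0^*$ gives $v_0^*\dot z(t)=\int_{-1}^0[v_0^*A_2(\theta)\dot z(t+\theta)+v_0^*A_3(\theta)z(t+\theta)]\,{\rm d}\theta$. But you do not identify the finishing step. The right-hand side defines an integral operator $Q:H^1(T-2,T;\mathbb{C}^n)\to L_2(T-1,T;\mathbb{C})$ which is \emph{compact} (it is an integral operator with $L_2$ kernel after the change of variable $\tau=t+\theta$), hence its range is a proper subset of $L_2(T-1,T;\mathbb{C})$. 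Exact controllability would require $\{v_0^*\dot z(t):t\in[T-1,T]\}=L_2(T-1,T;\mathbb{C})$, a contradiction. Your alternative plan to handle $\mu=0$ via the moment problem ``at infinity along the branch $\ln|\mu_m|$'' cannot work: when $\mu_m=0$ there is no such spectral branch, which is precisely why this subcase needs the direct compactness argument rather than the spectral one.
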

\begin{proof}
If the pair $(A_{-1}, B)$ is not controllable then there exist $\mu_0\in \sigma(A_{-1})$ and $v_0\in \mathbb{C}^n\backslash \{0\}$
such that $A_{-1}^*v_0=\overline{\mu_0}v_0$  and $B^*v_0=0$.

We begin with the case when $\mu_0=0$ is an uncontrollable eigenvalue of $A_{-1}$, i.e.
\begin{equation}\label{ctr_eq63}
A_{-1}^*v_0=0 \; \mbox{  and  } \; B^*v_0=0.
\end{equation}
Let us premultiply the equation~(\ref{ctr_eqn_1s}) by the vector $v_0^*$:
$$
v_0^* \dot{z}(t)=v_0^* A_{-1}\dot{z}(t-1)
+\int_{-1}^0 \left[v_0^* A_2(\theta) \dot{z}(t+\theta) + v_0^* A_3(\theta) z(t+\theta)\right] \: {\rm d}\theta
+ v_0^* Bu.
$$
Taking into account the relations (\ref{ctr_eq63}), we obtain the following equality:
\begin{equation}\label{ctr_eq64}
v_0^* \dot{z}(t)=
\int_{-1}^0 \left[v_0^* A_2(\theta) \dot{z}(t+\theta) + v_0^* A_3(\theta) z(t+\theta)\right] \: {\rm d}\theta.
\end{equation}
If we suppose that the system~(\ref{ctr_eqn_1s}) is  controllable at a time $T>0$
then the set of its solutions under different controls should coincide with the space $H^1(T-1,T; \mathbb{C}^n)$.
The last means that
$$\{ v_0^* \dot{z}(t), \; t\in [T-1,T]\}=L_2(T-1,T; \mathbb{C}).$$

On the other hand, the operator $Q(z)=\int_{-1}^0 \left[v_0^* A_2(\theta) \dot{z}(t+\theta) +
v_0^* A_3(\theta) z(t+\theta)\right] \: {\rm d}\theta$, which acts from $H^1(T-2,T; \mathbb{C}^n)$ to $L_2(T-1,T; \mathbb{C})$,
is Fredholm operator. Indeed, changing time variable $\tau=t+\theta$, we obtain
$$
Q(z)=\int_{t-1}^t \left[v_0^* A_2(\tau-t) \dot{z}(\tau) + v_0^* A_3(\tau-t) z(\tau)\right] \: {\rm d}\tau.
$$
Hence, the operator $Q$ is compact and, thus, its image
  does not coincide with
the whole space $L_2(T-1,T; \mathbb{C})$.
The  obtained contradiction  proves the theorem in the case $\mu_0=0$.

Now let us consider the case when only nonzero eigenvalues of $A_{-1}$ are uncontrollable.
Without loss of generality we may assume that $\det A_{-1}\not=0$.
Indeed, since $0\in\sigma(A_{-1})$ is controllable eigenvalue,  there is a matrix $P$ such that the matrix
$A_{-1}+BP$ is nonsingular (see \cite{Wonham_1985}). Obviously, the pair $(A_{-1}+BP, B)$ remains uncontrollable.
Then using a change of control
we obtain an equivalent controllability problem for system with neutral nonsingular matrix $A_{-1}+BP$.

Since  $A_{-1}$ is nonsingular, then the moment equalities (\ref{ctr_eqn_2.20})--(\ref{ctr_eqn_2.21}) hold.
Consider an uncontrollable eigenvalue $\mu_{m_0}$ of $A_{-1}$ ($A_{-1}^*v_0=\bar \mu_{m_0} v_0$, $B^*v_0=0$)
and the subset of (\ref{ctr_eqn_2.20}) which corresponds to $m=m_0$:
\begin{eqnarray}\label{ctr_eqn_4.6}
s_k=k \left \langle x_T, \psi_{m_0,k}\right\rangle
=\sum_{d=1}^r\int_0^T {\rm e}^{\lambda_{m_0}^k t}q_{m_0,k}^{d} u_d(t){\;{\rm d}} t, \quad \vert k\vert >N,
\end{eqnarray}
where  $q_{m_0,k}^{d}= k \left \langle {\mathbf b}_d, \psi_{m_0,k} \right \rangle_{M_2}$.
Let us show that there exist sequences $\{s_k\} \in \ell_2$ for which the moment problem (\ref{ctr_eqn_4.6}) is unsolvable.

For $m=m_0$, corresponding eigenvectors of $\widetilde{\mathcal A}$ are of the form
$\widetilde{\psi}_{m_0,k}=\left(v_0,\: \overline{\lambda_{m_0}^k} {\rm e}^{-\overline{\lambda_{m_0}^k} \theta} v_0\right)^T$,
what implies that
$\left \langle {\mathbf b}_d, \widetilde{\psi}_{m_0,k} \right \rangle_{M_2}
=\left \langle  b_d, v_0 \right \rangle_{\mathbb{C}^n}=0$
for all $d=1,\ldots,r$ and $|k|>N$.
Applying Lemma~\ref{ctr_lem_lem1}, we obtain the following estimate:
\begin{equation}\label{ctr_eqn_4.77}
\sum_{|k|>N} k^2 \sum_{d=1}^r\int_0^T \left| {\rm e}^{\lambda_m^k t}
\left \langle {\mathbf b}_d, \psi_{m,k} \right \rangle_{M_2} \right|^2\: {\rm d} t < +\infty.
\end{equation}
It follows from Lemma~\ref{ctr_ut_prop5} that the solvability set for the system~(\ref{ctr_eqn_4.6})
is a nontrivial linear manifold $\ell_T \subset \ell_2$, $\ell_T \ne \ell_2$ for any time $T>0$.
In other words, there exist sequences $\{s_k\}_{\vert k\vert > N}$ for which the system of equalities (\ref{ctr_eqn_4.6}) is
not solvable.
The last means, that there exist states $x_T$ which satisfy the condition (C1),
but which are not reachable from $0$ by virtue of the system~(\ref{ctr_eqn_1s}).
Thus, ${\mathcal R}_{T} \ne {\mathcal D}({\mathcal A})$ for any $T>0$.
The contradiction obtained completes the proof of the theorem.
\end{proof}

\section{Sufficiency in the case of one-dimensional control}\label{sec:suff1}

In the case of systems with one-dimensional control ($r=1$, $B=b\in \mathbb{C}^{n\times 1}$)
the moment problem (\ref{ctr_eqn_2.20})--(\ref{ctr_eqn_2.21}) takes the following form:
\begin{equation} \label{ctr_eqn_2.20a}
\alpha_{m,k} \left \langle x_T, \psi_{m,k}\right\rangle
=\int_0^T {\rm e}^{\lambda_m^k t}u(t) \;{\rm d} t,\quad |k|>N, m=1,\ldots,n,
\end{equation}
\begin{equation}\label{ctr_eqn_2.21a}
\left \langle x_T, \widehat \psi_s\right\rangle
=\int\nolimits_0^T {\rm e}^{\widehat \lambda_s t}\widehat q_{s}(t)u(t){\;{\rm d}} t,\quad s=1,\ldots, \ell_N,
\end{equation}
where $N$ is big enough, the family (\ref{ctr_eqn_2.20a}) is infinite,
$\widehat q_{j}$ are polynomials, the family (\ref{ctr_eqn_2.21a}) is finite, and
$\alpha_{m,k}=\left(\left \langle {\mathbf b}, \psi_{m,k} \right \rangle_{M_2}\right)^{-1}$, ${\mathbf b}=(b,0)^T$.

From Lemma~\ref{ctr_lem_lem2} and the explicit form of the basis $\{\psi\}$ of the operator ${\mathcal A}^*$ it follows
that for all $m=1,\ldots,n$ and $k: |k|>N$ the following estimate holds:
$$0<C_1\le\left|\frac{1}{k}\alpha_{m,k}\right|\le C_2<+\infty.$$

Our next objective is to find the conditions for the families $\{{\rm e}^{\lambda_m^k t}\}$
and  $\{{\rm e}^{\widehat \lambda_s t}\widehat q_{s}(t)\}$ to form a Riesz basis of the closure of its linear span.

Let $\delta_1, \dots, \delta_n$ be different, modulus $2\pi {\rm i}$, complex numbers,
and let $N\in\mathbb{N}$ be natural integer,
and let the set $\{\varepsilon_{m,k},\; \vert k\vert>N, m=1,\ldots,n \}\subset\mathbb{C}^n$
be such that $\sum\limits_{m,k}|\varepsilon_{m,k}|^2<+\infty$.
We denote by ${\mathcal E}_N$ the following (infinite) family of functions:
$$
{\mathcal E}_N=\left \{ {\rm e}^{(\delta_m+2\pi {\rm i}  k
+ \varepsilon_{m,k}) t},\quad {\vert k\vert>N}, m=1, \dots,n\right\}.
$$
Next, let $\varepsilon_1, \dots, \varepsilon_r$ be another collection of different complex numbers such that
 $\varepsilon_j \ne \delta_m+2\pi {\rm i}  k + \varepsilon_{m,k}$, $j=1, \dots, r$, $m=1, \dots, n$, $\vert k\vert >N$,
and let $m_1', \dots, m_r'$ be some positive integers. Let us denote by ${\mathcal E}_0$ the following (finite) family of functions
$$
{\mathcal E}_0=\left \{ {\rm e}^{\varepsilon_j t},
t {\rm e}^{\varepsilon_j t}, \dots,
t^{m_j'-1}{\rm e}^{\varepsilon_j t}\right \}_{j=1, \dots,r}.
$$
and by ${\mathcal E}$ the set  of functions ${\mathcal E}= {\mathcal E}_N\cup {\mathcal E}_0$.
\begin{thm}\label{ctr_thr_thm2}
{\rm (i)} If ${\sum\limits_{j=1}^rm'_j}=(2N+1)n$, then the family
${\mathcal E}$
forms a Riesz basis in $L_2(0,n)$.

{\rm (ii)} If $T>n$, then independently of the number of elements in ${\mathcal E}_0$
the family ${\mathcal E}$ forms a Riesz basis of the closure of its linear
span in the space $L_2(0,T)$.
\end{thm}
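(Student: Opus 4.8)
\textbf{Proof plan for Theorem~\ref{ctr_thr_thm2}.}

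The plan is to treat $\mathcal{E}$ as a perturbation of the pure-exponential family
$\mathcal{E}^0_N = \{{\rm e}^{(\delta_m + 2\pi {\rm i} k)t}: |k|>N,\ m=1,\dots,n\}$ and to use the classical theory of exponential Riesz bases (the Avdonin--Ivanov / Paley--Wiener machinery referenced in the paper). First I would establish the model case: the full family $\{{\rm e}^{(\delta_m + 2\pi {\rm i} k)t}: k\in\mathbb{Z},\ m=1,\dots,n\}$ forms a Riesz basis in $L_2(0,n)$, because its counting function has density exactly $n$ and the $\delta_m$ are distinct modulo $2\pi{\rm i}$ (so the exponents form a finite union of $n$ shifted copies of $2\pi{\rm i}\mathbb{Z}$, a classical complete interpolating sequence for the strip of width $n$). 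Removing the $(2N+1)n$ exponents with $|k|\le N$ from this basis leaves a Riesz basis of a subspace of $L_2(0,n)$ of codimension exactly $(2N+1)n$. The point of part (i) is then that adjoining to $\mathcal{E}^0_N$ any collection $\mathcal{E}_0$ of generalized exponentials $\{t^\ell {\rm e}^{\varepsilon_j t}\}$ with total cardinality $(2N+1)n$, whose exponents are disjoint from those already present, again yields a Riesz basis of all of $L_2(0,n)$: this is the standard fact that replacing finitely many elements of a Riesz basis by the same number of linearly independent vectors, chosen so that the combined family is still minimal and complete, preserves the Riesz-basis property. Minimality and completeness here follow from the disjointness and distinctness hypotheses on $\varepsilon_1,\dots,\varepsilon_r$ together with a standard entire-function (divisor/biorthogonality) argument.

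Next I would pass from $\mathcal{E}^0_N$ to $\mathcal{E}_N$, i.e. incorporate the $\ell_2$-small shifts $\varepsilon_{m,k}$. Since $\sum_{m,k}|\varepsilon_{m,k}|^2 < \infty$, Kadec-type / quadratic-closeness perturbation theorems (again in the form used in \cite{Avdonin_Ivanov_1995}, \cite{Young_1980}) give that $\{{\rm e}^{(\delta_m + 2\pi{\rm i}k + \varepsilon_{m,k})t}\}$ is quadratically close to $\{{\rm e}^{(\delta_m + 2\pi{\rm i}k)t}\}$ in $L_2(0,n)$, hence is a Riesz basis of a subspace of the same codimension whenever the unperturbed family is; one must also check that the perturbed family remains minimal, which follows from the distinctness of all exponents assumed in the statement. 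Combining this with the previous paragraph gives part (i): $\mathcal{E} = \mathcal{E}_N \cup \mathcal{E}_0$ is a Riesz basis of $L_2(0,n)$ when $\sum_j m'_j = (2N+1)n$.

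For part (ii), with $T>n$, the idea is that on the longer interval the exponential family has "room to spare": the density of the exponents of $\mathcal{E}_N$ is still $n < T$, so by the Avdonin--Ivanov criterion $\mathcal{E}_N$ (and hence $\mathcal{E}$, after adjoining the finitely many generalized exponentials of $\mathcal{E}_0$) is a Riesz basis \emph{of the closure of its linear span} in $L_2(0,T)$, regardless of how many elements $\mathcal{E}_0$ has — the excess dimension of $L_2(0,T)$ over the span is simply infinite and plays no role. Concretely I would argue: restrict to $L_2(0,n) \subset L_2(0,T)$, apply part (i)-type reasoning to see $\mathcal{E}$ is a Riesz basis of its span there, and then use that a Riesz basis of a subspace of $L_2(0,n)$, viewed inside the larger $L_2(0,T)$, is still a Riesz sequence whose closed span is a closed subspace on which it is a Riesz basis (Riesz-sequence-ness is intrinsic and does not change under the isometric inclusion). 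Alternatively, invoke directly the characterization that a family of (generalized) exponentials with separated exponents of upper density $d$ is a Riesz basis in its span in $L_2(0,T)$ as soon as $T \ge d$; here $d = n \le T$.

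The main obstacle I expect is the bookkeeping in part (i): proving that after deleting $(2N+1)n$ exponentials from a Riesz basis of $L_2(0,n)$ and inserting exactly $(2N+1)n$ new generalized exponentials, the result is again a Riesz basis and not merely a Riesz sequence of codimension zero "accidentally" failing completeness. This requires a genuine completeness/minimality verification — typically via constructing the generating entire function (a sine-type function with the appropriate zero set, modified by the finite Blaschke-type factor encoding $\varepsilon_1,\dots,\varepsilon_r$ with multiplicities $m'_j$) and checking it still generates a Riesz basis by the standard sine-type-function theorem. The distinctness and disjointness hypotheses on the $\varepsilon_j$ and $\delta_m + 2\pi{\rm i}k + \varepsilon_{m,k}$ are exactly what is needed to make this entire function have simple, well-separated zeros of the right total density, so the argument goes through; it is just the step that needs the most care.
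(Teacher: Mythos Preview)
The paper does not prove Theorem~\ref{ctr_thr_thm2} in-line; it only records that the proof, based on results of \cite{Avdonin_Ivanov_1995}, is given in \cite{Rabah_Sklyar_2007}. Your plan for part~(i) is the standard one and matches what those references do: build the unperturbed Riesz basis of $L_2(0,n)$ from the $n$ interleaved arithmetic progressions, delete $(2N+1)n$ elements, reinsert the generalized exponentials of $\mathcal{E}_0$ via a finite-swap / sine-type-function argument, and absorb the $\ell_2$-shifts $\varepsilon_{m,k}$ by Bari's theorem. The bookkeeping you flag (completeness and minimality after the finite replacement) is indeed the one place needing care, and the generating-entire-function route you indicate is the right way to handle it.

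Your first argument for part~(ii), however, breaks down. If $\sum_j m'_j > (2N+1)n$, the family $\mathcal{E}$ regarded on the interval $[0,n]$ is \emph{not} a Riesz sequence in $L_2(0,n)$: by your own part~(i) analysis, $\mathcal{E}_N$ already spans a subspace of codimension exactly $(2N+1)n$ there, so adjoining more than $(2N+1)n$ further elements produces an overcomplete system with no lower Riesz bound. Hence ``part~(i)-type reasoning on $[0,n]$'' gives nothing to transfer to $[0,T]$, precisely in the case the statement singles out (``independently of the number of elements in $\mathcal{E}_0$''). You must rely on your second argument --- the density/interpolation criterion from \cite{Avdonin_Ivanov_1995} --- which is correct and is the route the paper's citation points to: the exponents of $\mathcal{E}$ form a separated set of density $n<T$, so $\mathcal{E}$ is a Riesz basis of its closed span in $L_2(0,T)$ regardless of the size of the finite family $\mathcal{E}_0$. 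Once you drop the restriction argument and keep only the density argument for~(ii), your plan is sound and aligned with the referenced proof.
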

The proof of this theorem, based on results of \cite{Avdonin_Ivanov_1995}, may be found in \cite{Rabah_Sklyar_2007}.

Let us prove sufficiency of the controllability conditions {\rm (i)} and {\rm (ii)} of Theorem~\ref{ctr_thr_intr}.

\begin{thm} \label{ctr_thr_thm5}
Let $u\in \mathbb{C}$ ($r=1$) and the conditions {\rm (i)} and {\rm (ii)} of Theorem~\ref{ctr_thr_intr} hold.
Then
\begin{enumerate}
\item[{\rm (1)}] the system~(\ref{ctr_eqn_1s}) is controllable at any time $T>n$;
\item[{\rm (2)}] the estimation of the critical time of controllability is exact,
i.e. the system~(\ref{ctr_eqn_1s}) is uncontrollable at any time $T\le n$.
\end{enumerate}
\end{thm}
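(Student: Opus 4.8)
The plan is to reduce everything to the moment problem (\ref{ctr_eqn_2.20a})--(\ref{ctr_eqn_2.21a}) and apply Theorem~\ref{ctr_thr_thm2}. By the reductions of Section~\ref{sec:prelim} I may assume the pair $(A_{-1},B)$ has the form (a) (and, since $r=1$, condition (b) is automatic once (a) holds): $A_{-1}$ has $n$ distinct real simple eigenvalues $\mu_1,\dots,\mu_n$ none of which lies in $\{0,1\}$, and the basis of eigenvectors $\{c_m\}$ is available. The exponents occurring in (\ref{ctr_eqn_2.20a}) are the eigenvalues $\lambda_m^k = \ln|\mu_m| + 2k\pi\mathrm i + O(1/k)$; setting $\delta_m=\ln|\mu_m|$ and $\varepsilon_{m,k}=\lambda_m^k-\delta_m-2k\pi\mathrm i$ gives $\sum_{m,k}|\varepsilon_{m,k}|^2<\infty$ from the circle estimates $\sum_k (r^{(k)})^2<\infty$ of Section~\ref{sec:Rieszbasis}, and the $\delta_m$ are distinct mod $2\pi\mathrm i$ because the $|\mu_m|$ need not all be distinct but the real parts $\ln|\mu_m|$ — wait, they can coincide; here is the first thing to be careful about: distinctness mod $2\pi\mathrm i$ of $\delta_m$ is \emph{not} guaranteed, so one must instead invoke the finer fact that the full family $\{\lambda_m^k\}$ is separated, which is what the Riesz-basis property in Section~\ref{sec:Rieszbasis} actually encodes; I would lean on that rather than on naive distinctness. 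The finitely many exceptional eigenvalues $\widehat\lambda_s$ and the polynomial weights $\widehat q_s(t)$ supply the family $\mathcal E_0$.

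For part~(1), fix $T>n$. By Theorem~\ref{ctr_thr_thm2}(ii) the family $\mathcal E = \{{\rm e}^{\lambda_m^k t}\}\cup\{t^{j}{\rm e}^{\widehat\lambda_s t}\}$ forms a Riesz basis of the closure of its linear span in $L_2(0,T)$, regardless of how many elements $\mathcal E_0$ contains. Applying Lemma~\ref{ctr_ut_prop3}, the moment problem (\ref{ctr_eqn_2.20a}) with right-hand sides $\{\alpha_{m,k}\langle x_T,\psi_{m,k}\rangle\}$ together with the finite system (\ref{ctr_eqn_2.21a}) is solvable in $L_2(0,T)$ \emph{iff} the sequence of right-hand sides is in $\ell_2$. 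Since $|\tfrac1k\alpha_{m,k}|$ is bounded above and below (from Lemma~\ref{ctr_lem_lem2}), membership of $\{\alpha_{m,k}\langle x_T,\psi_{m,k}\rangle\}$ in $\ell_2$ is equivalent to $\sum k^2|\langle x_T,\psi_{m,k}\rangle|^2<\infty$, i.e. to condition (C1) of Lemma~\ref{ctr_ut_prop6}, i.e. to $x_T\in\mathcal D(\mathcal A)$. Two points need attention here. First, one must check that conditions (i)--(ii) of Theorem~\ref{ctr_thr_intr} are exactly what guarantees $\langle\mathbf b,\psi_{m,k}\rangle\neq 0$ for all $m,k$ (so that $\alpha_{m,k}$ is well-defined) and that the finitely many ``polynomial'' equations in (\ref{ctr_eqn_2.21a}) are consistent: condition (i) says no eigenvector of $\mathcal A^*$ lies in $\operatorname{Ker}\mathcal B^*$, which kills the bad cases among the $\widehat\psi_s$ as well; condition (ii) is what rules out $\langle b, y_{m,k}\rangle$ degenerating along a whole branch $m=m_0$, as in the proof of Theorem~\ref{ctr_thr_thm63}. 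Second, one must verify the finite block (\ref{ctr_eqn_2.21a}) can be solved simultaneously with the infinite block without destroying $\ell_2$-membership — this is exactly the content of the Riesz-basis statement in Theorem~\ref{ctr_thr_thm2}(ii) for the union $\mathcal E_N\cup\mathcal E_0$, so it is already handled, but it should be spelled out. Concluding, every $x_T\in\mathcal D(\mathcal A)$ is reachable, and by Lemma~\ref{ctr_ut_prop6} nothing outside $\mathcal D(\mathcal A)$ is; hence $\mathcal R_T=\mathcal D(\mathcal A)$.

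For part~(2), take $T\le n$. I would argue by contradiction from Lemma~\ref{ctr_ut_prop4}. The number of ``extra'' functions needed to complete $\mathcal E_N$ to a Riesz basis of $L_2(0,n)$ is, by Theorem~\ref{ctr_thr_thm2}(i), exactly $(2N+1)n - |\mathcal E_0|$ when this is nonnegative; in any case, on the interval $[0,T_1]$ with $T_1=n$ the family $\mathcal E$ (suitably truncated) is a Riesz basis of a subspace of $L_2(0,n)$ of \emph{finite} codimension — this is the crucial structural input and the place where the precise length $n$ of the critical interval enters. Then Lemma~\ref{ctr_ut_prop4} produces, for every $T<n$, an infinite-dimensional subspace $\ell_T\subset\ell_2$ of sequences $\{s_k\}$ for which (\ref{ctr_eqn_2.20a}) is unsolvable on $[0,T]$; translating back through the (bounded, boundedly invertible) factors $\alpha_{m,k}$ and through the Riesz basis $\{\psi_{m,k}\}$, each such $\{s_k\}$ corresponds to a genuine state $x_T\in\mathcal D(\mathcal A)$ (it satisfies (C1)) that is not reachable at time $T$. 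Hence $\mathcal R_T\subsetneq\mathcal D(\mathcal A)$ for $T<n$. The boundary case $T=n$ is handled the same way once one observes that on $[0,n]$ the span of $\mathcal E_N$ alone (before adjoining $\mathcal E_0$) is a \emph{proper} subspace, so some state in $\mathcal D(\mathcal A)$ is still missed; alternatively one notes $\mathcal R_n\subset\overline{\mathcal R_n}$ and uses a limiting/closed-range argument. I expect the main obstacle to be precisely this bookkeeping at and below the critical time $T=n$: making rigorous that the finitely many exceptional equations (\ref{ctr_eqn_2.21a}) and the finite-codimension defect of $\operatorname{Cl\,Lin}\mathcal E_N$ combine to leave an infinite-dimensional unreachable set, rather than merely a finite-dimensional one, and ensuring the $\alpha_{m,k}$-rescaling does not silently alter $\ell_2$-membership. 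Everything else is a bounded-operator translation between the moment formulation and the state space.
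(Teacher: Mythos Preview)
Your plan for part~(1) and for the strict inequality $T<n$ in part~(2) is essentially the paper's argument: apply Theorem~\ref{ctr_thr_thm2}(ii) and Lemma~\ref{ctr_ut_prop3} for $T>n$, and Lemma~\ref{ctr_ut_prop4} for $T<n$. Your worry about distinctness of the $\delta_m=\ln|\mu_m|$ modulo $2\pi\mathrm i$ is easily dissolved: since the $\mu_m$ in~(\ref{ctr_eqn_spec_A-1}) are chosen freely, one simply takes them positive and distinct so that the $\ln\mu_m$ are distinct reals. Also, you do not need condition~(ii) separately to get $\langle\mathbf b,\psi_{m,k}\rangle\neq 0$; condition~(i) alone gives this, since $\psi_{m,k}$ is an eigenvector of $\mathcal A^*$ and $\langle\mathbf b,\psi_{m,k}\rangle=\langle b,y_{m,k}\rangle$ with $y_{m,k}\in\mathrm{Ker}\,\Delta_{\mathcal A}^*(\overline{\lambda_m^k})$.

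The genuine gap is your treatment of the borderline case $T=n$. Observing that $\mathrm{Cl\,Lin}\,\mathcal E_N$ is a proper subspace of $L_2(0,n)$ does \emph{not} by itself show a state is missed, because the moment problem also uses the finite family $\mathcal E_0=\Phi_2$, and if $\mathcal E_0$ happened to complete $\mathcal E_N$ to a Riesz basis you would have full reachability. The point the paper makes---and the only thing you are missing---is a count: the number of functions in $\Phi_2$ is $\ell_N=(2N+2)n$, while $\mathrm{codim}\,\mathrm{Cl\,Lin}\,\Phi_1=(2N+1)n$ in $L_2(0,n)$ by Theorem~\ref{ctr_thr_thm2}(i). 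Hence $\Phi_1\cup\Phi_2$ is overcomplete by exactly $n$ elements, so $n$ of the moment equations are linear combinations of the others; this forces $n$ independent linear constraints on the right-hand sides for solvability, i.e. $\mathrm{codim}_{\mathcal D(\mathcal A)}\mathcal R_n=n>0$. Your suggested ``limiting/closed-range'' alternative would not produce this without the same count.
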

\begin{proof}
Let us note that dimensions of all eigenspaces (corresponding to different eigenvalues) of ${\mathcal {A}}^*$ are equal to $1$.
Indeed, otherwise there exists an eigenvector $g$ of the operator ${\mathcal {A}}^*$,
such that  $\langle {\mathbf b},g\rangle_{M_2}=0$.
Since $g=(y,\: z(\theta))^T$, where $y$: $\Delta^*_{{\mathcal {A}}}(\lambda_0)y=0$, $\lambda_0\in\sigma({\mathcal {A}}^*)$,
and since $\langle {\mathbf b},g\rangle_{M_2}=\langle b,y\rangle_{\mathbb{C}^n}$,
we obtain a contradiction with the condition {\rm (i)}.

Let us consider the problem (\ref{ctr_eqn_2.20a})--(\ref{ctr_eqn_2.21a}).
From condition {\rm (i)} it follows that $\left \langle {\mathbf b}, \psi_{m,k} \right \rangle_{M_2}\not=0$ for all $m$ and $k$.
Moreover, all polynomial $\{\widehat q_s(t)\}$, $s=1,\ldots,\ell_N$ are nontrivial.
By the moment problem we construct the following families of functions:
$$
\begin{array}{lcl}
\Phi_1 & = & \left\{ {\rm e}^{\lambda_m^{k}t}, \ \vert k\vert>N, \ m=1,\dots,n \right\}, \\
\Phi_2 & = & \left\{ {\rm e}^{\widehat \lambda_s t}\widehat q_{s}(t), \ s=1,\dots,\ell_N\right\}.
\end{array}
$$
Due to Theorem~\ref{ctr_thr_thm2} for $T>n$ and for big enough $N$, the family
$$
\Phi=\Phi_1 \bigcup \Phi_2
$$
constitutes a Riesz basis in ${\mathrm {Cl}\,}{{\mathrm {Lin}\,}\Phi} \subset L_2(0,T)$.
Thus, due to Lemma~\ref{ctr_ut_prop3}, the moment problem~(\ref{ctr_eqn_2.20a})--(\ref{ctr_eqn_2.21a})
is solvable if and only if the right-hand side is an element of $\ell_2$,
or, equivalently the condition (C1) from Lemma~\ref{ctr_ut_prop6} holds.
Since (C1) is equivalent to (C3) we conclude that for $T>n$ the moment problem is solvable
if and only if $x_T\in {\mathcal D}({\mathcal A})$, i.e. ${\mathcal {R}}_{T}={\mathcal {D}}({\mathcal {A}})$.

To prove the assertion~(2) we remind that the number of elements in family $\Phi_2$ equals to $\ell_N=(2N+2)n$.
On the other hand, it follows from Theorem~\ref{ctr_thr_thm2} that in $L_2(0,n)$ one has
$$
{\mathrm {codim}\,} {\mathrm {Cl}\,} {\mathrm {Lin}\,} \Phi_1 = (2N+1)n.
$$
Thus, the family $\Phi=\Phi_1\cup \Phi_2$ contains $n$ functions,
which can be represented as linear combination of other functions from this family.
This means that the codimension ${\mathcal {R}}_T$ in ${\mathcal {D}}({\mathcal {A}})$
is not equal to zero: ${\mathrm {codim}\,} {\mathcal {R}}_T=n$.
Hence, the reachability set ${\mathcal {R}}_T$ for $T=n$ is not equal to ${\mathcal {D}}({\mathcal {A}})$
and the system is not controllable.
For $T<n$ it follows from Lemma~\ref{ctr_ut_prop4} that the codimension of the set
${\mathcal {R}}_T$ in ${\mathcal {D}}({\mathcal {A}})$ is infinite.
\end{proof}

\section{Sufficient conditions: the multivariable case}\label{sec:suff2}

Consider the case $\dim B=r>1$. Without loss of generality we assume that the pair $({A}_{-1}, {B})$
is in Frobenius normal form, i.e.
$A_{-1}={\rm diag}\{F_1, \ldots, F_r\}$, $\dim F_i=s_i$, and $F_i$ are of the form~(\ref{ctr_eqn_frobenius});
${B}={\rm diag}\{g_1,\dots,g_r\}$, where $g_i=(0,\:0,\ldots, 1)^{\mathrm T}\in\mathbb{C}^{s_i}$.
It is well-known that
\begin{equation} \label{ctr_eqn_2.20bb}
\max\limits_i \dim F_i =n_1,
\end{equation}
where $n_1$ is the first controllability index of the pair $({A}_{-1}, {B})$, i.e. $n_1$ is the minimal integer $\nu$
satisfying the relation
${\mathrm {rank}\,} (B,\; A_{-1}B,\: \ldots, \: A_{-1}^{\nu-1}B)=n$.

According to the representation in Frobenius form, we can rewrite the infinite part~(\ref{ctr_eqn_2.20}) of the moment problem
as follows:
\begin{equation} \label{ctr_eqn_2.20b}
\begin{array}{c}
k \left \langle x_T, \psi_{m,k}\right\rangle
=\int\limits_0^T {\rm e}^{\lambda_m^k t}q_{m,k}^{1} u_1(t){\;{\rm d}} t+
\sum\limits_{d\not=1}\int\limits_0^T {\rm e}^{\lambda_m^k t}q_{m,k}^{d} u_d(t){\;{\rm d}} t, \quad m\in S_1,\\
k \left \langle x_T, \psi_{m,k}\right\rangle
=\int\limits_0^T {\rm e}^{\lambda_m^k t}q_{m,k}^{2} u_2(t){\;{\rm d}} t+
\sum\limits_{d\not=2}\int\limits_0^T {\rm e}^{\lambda_m^k t}q_{m,k}^{d} u_d(t){\;{\rm d}} t, \quad m\in S_2,\\
\ldots\\
k \left \langle x_T, \psi_{m,k}\right\rangle
=\int\limits_0^T {\rm e}^{\lambda_m^k t}q_{m,k}^{r} u_r(t){\;{\rm d}} t+
\sum\limits_{d\not=r}\int\limits_0^T {\rm e}^{\lambda_m^k t}q_{m,k}^{d} u_d(t){\;{\rm d}} t, \quad m\in S_r,
\end{array}
\end{equation}
where $S_1=\{1,\ldots,s_1\}$, $S_2=\{s_1+1,\ldots, s_1+s_2\}$, ..., $S_r=\{s_1+\ldots,s_{r-1}+1,\ldots, n\}$.

Next we apply Theorem~\ref{ctr_thr_thm2} to the family of functions from~(\ref{ctr_eqn_2.20b}).
Let us fix $d\in\{1,\dots,r\}$ and choose an arbitrary subset of $L\subset \{1,\dots,n\}$.

\begin{thm}\label{ctr_thr_thm3}
For arbitrary $d$, $L$, and for all $T > n'=|L|$ the set
\begin{eqnarray*}
\Phi_1=\left \{ {\rm e}^{\lambda_m^{k}t}q_{m,k}^{d}, \quad \vert k\vert>N; \ m \in L \right\}
\end{eqnarray*}
constitutes a Riesz basis of the closure of its linear span ${\mathrm {Cl}\,} {\mathrm {Lin}\Phi_1}$ in $L_2(0,T)$.

If $T=n'$, then ${\rm codim}\: {\mathrm {Cl}\,} {\mathrm {Lin}\Phi_1}=(2N+1)n'$ in the space $L_2(0,n')$.
\end{thm}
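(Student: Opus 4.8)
The plan is to reduce Theorem~\ref{ctr_thr_thm3} to Theorem~\ref{ctr_thr_thm2}, by first putting the exponents $\lambda_m^k$, $m\in L$, into the normal form required there and then absorbing the scalar coefficients $q_{m,k}^d$. Using the spectral asymptotics recalled in Section~\ref{sec:Rieszbasis}, for $m\in L$ and $|k|>N$ one writes
\[
\lambda_m^k=\delta_m+2\pi\mathrm{i}k+\varepsilon_{m,k},\qquad \delta_m:=\ln|\mu_m| .
\]
The numbers $\delta_m$, $m\in L$, are pairwise distinct real numbers, hence pairwise distinct modulo $2\pi\mathrm{i}$, and since $\lambda_m^k$ lies in the circle $L_m^k(r^{(k)})$ with $\sum_k(r^{(k)})^2<\infty$, the remainders satisfy $\sum_{m\in L,\,|k|>N}|\varepsilon_{m,k}|^2<\infty$. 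Thus $\{\mathrm e^{\lambda_m^k t}:|k|>N,\ m\in L\}$ is exactly a family of type $\mathcal E_N$ in Theorem~\ref{ctr_thr_thm2}, with $n$ replaced by $n'=|L|$.

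Next I deal with the factors $q_{m,k}^d=k\langle\mathbf b_d,\psi_{m,k}\rangle$. By Lemma~\ref{ctr_lem_lem2} they are uniformly bounded, $|q_{m,k}^d|\le\delta_1$. What is also needed is a uniform lower bound $|q_{m,k}^d|\ge\delta_0>0$ on $L$. By Lemma~\ref{ctr_lem_lem1}, applied at $t=0$ and multiplied by $k$, $q_{m,k}^d$ differs by a square-summable amount $O(\alpha_k)$ from the unperturbed quantity $k\langle\mathbf b_d,\widetilde\psi_{m,k}\rangle$, which tends to a limit that is nonzero exactly for the indices $m$ effectively controlled by the $d$-th input — in particular for $m$ in the $d$-th block $S_d$ of the Frobenius form used in~(\ref{ctr_eqn_2.20b}), since there the last component of the left eigenvector of the companion block $F_d$ does not vanish, whereas $\langle b_d,v^{(m)}\rangle=0$ for $m$ outside that block (here $v^{(m)}$ is the eigenvector of $A_{-1}^*$ attached to $\mu_m$). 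Hence for the index sets $L$ to which the theorem is applied (enlarging $N$ if necessary) one has $0<\delta_0\le|q_{m,k}^d|\le\delta_1$, so the diagonal map $(c_{m,k})\mapsto(q_{m,k}^d c_{m,k})$ is bounded and boundedly invertible on $\ell_2$; consequently multiplying each $\mathrm e^{\lambda_m^k t}$ by $q_{m,k}^d$ neither changes the closed linear span nor destroys the Riesz-basis property inside that span. For an index $m$ with $\langle b_d,v^{(m)}\rangle=0$ one would instead get $q_{m,k}^d=O(\alpha_k)$, a square-summable null sequence, and such $m$ do not occur in $L$.

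It then remains to invoke Theorem~\ref{ctr_thr_thm2}. For $T>n'$, part~(ii) with $\mathcal E_0=\varnothing$ and $n'$ in the role of $n$ gives that $\{\mathrm e^{\lambda_m^k t}:|k|>N,\ m\in L\}$ is a Riesz basis of the closure of its linear span in $L_2(0,T)$; by the previous paragraph the same holds for $\Phi_1$. For $T=n'$, choose $\varepsilon_1,\dots,\varepsilon_r$ and multiplicities $m_1',\dots,m_r'$ with $\sum_j m_j'=(2N+1)n'$, the $\varepsilon_j$ pairwise distinct and distinct from all $\lambda_m^k$; then part~(i) shows that $\{\mathrm e^{\lambda_m^k t}\}\cup\mathcal E_0$ is a Riesz basis of $L_2(0,n')$. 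Removing the $(2N+1)n'$ functions of $\mathcal E_0$ leaves a family whose closed span has codimension $(2N+1)n'$, i.e.\ $\mathrm{codim}\,\mathrm{Cl}\,\mathrm{Lin}\{\mathrm e^{\lambda_m^k t}\}=(2N+1)n'$, and by the second paragraph the same holds for $\mathrm{Cl}\,\mathrm{Lin}\,\Phi_1$.

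I expect the only genuine difficulty to be the uniform lower bound $|q_{m,k}^d|\ge\delta_0$ of the second step: one has to exclude that the relevant component of the adjoint eigenvector $\psi_{m,k}$ decays faster than $\|\psi_{m,k}\|/|k|$, which is precisely where Lemma~\ref{ctr_lem_lem1} together with the non-degeneracy built into the Frobenius normal form of $(A_{-1},B)$ enters. Once this is secured, the rest is a direct transcription of Theorem~\ref{ctr_thr_thm2}.
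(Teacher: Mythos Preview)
Your approach is essentially the paper's: strip off the scalars $q_{m,k}^d$ and appeal to Theorem~\ref{ctr_thr_thm2}. The paper packages this as an operator $\mathcal{T}: e^{\lambda_m^k t} q_{m,k}^d \mapsto e^{\lambda_m^k t}$, invokes only the upper bound from Lemma~\ref{ctr_lem_lem2}, and asserts that $\mathcal{T}$ extends to a bounded bijection of $\mathrm{Cl}\,\mathrm{Lin}\,\Phi_1$ onto itself.

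You are right to single out the uniform lower bound $|q_{m,k}^d|\ge\delta_0>0$ as the crux: without it $\mathcal{T}$ is unbounded and the paper's argument does not close. Your derivation of this bound---comparing $q_{m,k}^d$ to $k\langle\mathbf b_d,\widetilde\psi_{m,k}\rangle$ via Lemma~\ref{ctr_lem_lem1} and reading off the nonvanishing of the limiting quantity from the Frobenius block structure---is correct and supplies what the paper's proof tacitly assumes. You are also correct that the hypothesis ``for arbitrary $d$, $L$'' is overstated: the conclusion holds only when $\langle b_d,v^{(m)}\rangle\ne 0$ for every $m\in L$, in particular for $L\subset S_d$, which is precisely the case used downstream in Theorem~\ref{ctr_thr_thm6}. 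For $m\notin S_d$ one has $q_{m,k}^d=O(\alpha_k)\to 0$ and $\Phi_1$ cannot be a Riesz basis. Finally, your explicit handling of the codimension assertion at $T=n'$ by adjoining a finite family $\mathcal E_0$ with $\sum_j m_j'=(2N+1)n'$ and invoking part~(i) of Theorem~\ref{ctr_thr_thm2} is more complete than the paper's proof, which states the codimension in the theorem but does not argue it.
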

\begin{proof}
Let us consider the linear operator ${\mathcal {T}}: {\mathrm {Lin}\,} \Phi_1 \to {\mathrm {Lin}\,}\Phi_1$ defined
on elements of $\Phi_1$ by the following relations
$$
{\mathcal {T}}({\rm e}^{\lambda_m^{k}t}q_{m,k}^{d})={\rm e}^{\lambda_m^{k}t}, \quad \vert k\vert>N, m \in L.
$$
Due to Lemma~\ref{ctr_lem_lem2} the family $\{q_{m,k}^{d}\}$ is uniformly bounded.
Thus, from Theorem~\ref{ctr_thr_thm2} we obtain that the operator ${\mathcal {T}}$ is bounded in the sense of $L_2(0,T)$
and its extension to $L={\mathrm {Cl}\,}{{\mathrm {Lin}\,}\Phi_1}$ is a bounded one-to-one operator from $L$ to $L$.

Hence, since due to Theorem~\ref{ctr_thr_thm2} the images of the elements of $\Phi_1$  form a Riesz basis of $L$,
then $\Phi_1$ is also a Riesz basis of $L$ in $L_2(0,T)$.
\end{proof}

We also need the following result (see \cite[Theorem 5.5]{Rabah_Sklyar_2007}).
\begin{thm}\label{ctr_thr_cor}
Consider the system~{\rm (\ref{ctr_eqn_*})} and suppose that there are an integer $N$
and a time $T_0>0$ such that the moment problem~{\rm (\ref{ctr_eqn_2.20})}
is solvable for $T=T_0$ and all sequences
$\left \{ k \left \langle x_T, \psi_{m,k}\right\rangle\right \}_{\vert k\vert >N}$ satisfying {\rm (C1)}.

Then, from the condition {\rm (i)} of Theorem~\ref{ctr_thr_intr}
it follows that ${\mathcal {R}}_T={\mathcal {D}}({\mathcal {A}})$ for $T>T_0$.
\end{thm}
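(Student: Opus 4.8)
The plan is to establish the inclusion $\mathcal D(\mathcal A)\subseteq\mathcal R_T$ for every $T>T_0$; the reverse inclusion $\mathcal R_T\subseteq\mathcal D(\mathcal A)$ is already supplied by Lemma~\ref{ctr_ut_prop6}. I would fix $x_T\in\mathcal D(\mathcal A)$ and use the equivalence (C1)$\Leftrightarrow$(C3): the sequence $\{k\langle x_T,\psi_{m,k}\rangle\}_{|k|>N}$ lies in $\ell_2$, and reachability of $x_T$ at time $T$ is equivalent to solvability on $[0,T]$ of the moment problem (\ref{ctr_eqn_2.20})--(\ref{ctr_eqn_2.21}) with these $\ell_2$ data in the infinite part and the (unconstrained) data $\{\langle x_T,\widehat\psi_s\rangle\}_{s=1}^{\ell_N}$ in the finite part.

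First I would dispose of the infinite part. By hypothesis (\ref{ctr_eqn_2.20}) is solvable at $T=T_0$ for every right-hand side satisfying (C1); extending such a control by zero from $[0,T_0]$ to $[0,T]$ shows that (\ref{ctr_eqn_2.20}) is solvable at time $T$ as well. I would fix one control $u_0\in L_2(0,T;\mathbb C^r)$ realizing the data $\{k\langle x_T,\psi_{m,k}\rangle\}$ in (\ref{ctr_eqn_2.20}) and record the values $c_s=\sum_{d=1}^r\int_0^T{\rm e}^{\widehat\lambda_s t}\widehat q_s^d(t)u_{0,d}(t)\,{\rm d}t$, $s=1,\dots,\ell_N$, that it produces in (\ref{ctr_eqn_2.21}).

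The heart of the argument is then to correct the finite part without perturbing the infinite one. I would seek $w\in L_2(0,T;\mathbb C^r)$ orthogonal to the family $\Phi_1=\{{\rm e}^{\lambda_m^k t}q_{m,k}^d\}$ — so that $u_0+w$ still solves (\ref{ctr_eqn_2.20}) — and with $\sum_{d=1}^r\int_0^T{\rm e}^{\widehat\lambda_s t}\widehat q_s^d(t)w_d(t)\,{\rm d}t=\langle x_T,\widehat\psi_s\rangle-c_s$ for all $s$. A short computation with the orthogonal projection onto $\overline{{\rm span}}\,\Phi_1$ shows that such a $w$ exists, for an arbitrary prescribed right-hand side, if and only if the finite family $\Phi_2=\{{\rm e}^{\widehat\lambda_s t}\widehat q_s^d(t)\}$ is linearly independent and $\overline{{\rm span}}\,\Phi_1\cap{\rm span}\,\Phi_2=\{0\}$ in $L_2(0,T;\mathbb C^r)$. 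Here condition (i) enters decisively: as noted in the proof of Theorem~\ref{ctr_thr_thm4}, it is equivalent to $\mathcal A^*$ having no eigenvector in $\ker\mathcal B^*$, which forces every $q_{m,k}^d$ to be nonzero and, on the finite-dimensional spectral subspace of $\{\widehat\lambda_s\}$, makes the induced pair controllable, so that $\Phi_2$ is linearly independent; and, the exponents $\widehat\lambda_s$ being distinct from all the $\lambda_m^k$, the transversality $\overline{{\rm span}}\,\Phi_1\cap{\rm span}\,\Phi_2=\{0\}$ holds on every interval of length $>T_0$ by the generation results of Theorem~\ref{ctr_thr_thm2}(ii) and Theorem~\ref{ctr_thr_thm3} (after enlarging $N$ if necessary, which is harmless since a subfamily of a Riesz basis is again one, so the hypothesis is preserved). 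The control $u=u_0+w$ then steers $0$ to $x_T$ at time $T$, whence $\mathcal R_T=\mathcal D(\mathcal A)$.

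The main obstacle — and the only place where the strict inequality $T>T_0$ is essential — is the transversality $\overline{{\rm span}}\,\Phi_1\cap{\rm span}\,\Phi_2=\{0\}$. At $T=T_0$ it fails: the closed span of $\Phi_1$ does not leave room for the $\ell_N$ additional exponential-polynomial modes of $\Phi_2$, as the codimension count in Theorem~\ref{ctr_thr_thm3} (and in the proof of Theorem~\ref{ctr_thr_thm5}) shows, so the finite part cannot in general be corrected. Passing to $T>T_0$ enlarges the codimension of $\overline{{\rm span}}\,\Phi_1$ enough to absorb $\Phi_2$; the rigorous verification relies on the Avdonin--Ivanov generation machinery (Theorems~\ref{ctr_thr_thm2} and~\ref{ctr_thr_thm3}) applied to the perturbed exponents $\lambda_m^k$, together with condition (i) to guarantee the nondegeneracy of the coefficients $q_{m,k}^d$ and $\widehat q_s^d$. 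The remaining ingredients — the inclusion $\mathcal R_T\subseteq\mathcal D(\mathcal A)$ and the (C1)$\Leftrightarrow$(C3) dictionary from Lemma~\ref{ctr_ut_prop6}, the extend-by-zero device for the infinite part, and the finite-dimensional linear algebra that produces $w$ — are routine.
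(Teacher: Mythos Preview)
The paper does not prove this result itself; it is quoted from \cite[Theorem~5.5]{Rabah_Sklyar_2007}. So there is no in-paper argument to compare against, and I can only assess your proposal on its own terms.

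Your overall strategy---solve the infinite block (\ref{ctr_eqn_2.20}) on $[0,T_0]$ by hypothesis, extend the control by zero to $[0,T]$, then correct the finite block (\ref{ctr_eqn_2.21}) with a control $w$ annihilating the infinite family---is the natural one and is almost certainly the route of the cited proof. Your reduction of the correction step to the transversality $\overline{\mathrm{span}}\,\Phi_1\cap\mathrm{span}\,\Phi_2=\{0\}$ in $L_2(0,T;\mathbb{C}^r)$ is correct, and the use of condition~(i) to obtain linear independence of $\Phi_2$ via the Hautus test on the finite spectral block is sound.

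The genuine gap is your justification of the transversality when $r>1$. Theorems~\ref{ctr_thr_thm2}(ii) and~\ref{ctr_thr_thm3} are \emph{scalar} statements in $L_2(0,T)$: the first requires $T>n$, the second treats a single coordinate $d$ with $m\in L$ and requires $T>|L|$. In the intended application $T_0=n_1$, and as soon as $r>1$ one has $n_1<n$; for $n_1<T\le n$ a coordinate-wise reduction to Theorem~\ref{ctr_thr_thm2}(ii) is therefore unavailable. Nor can you invoke Theorem~\ref{ctr_thr_thm3} in each coordinate, since for $m\in S_i$ and $d\ne i$ the coefficients $q_{m,k}^d$ are not bounded away from zero---indeed $|q_{m,k}^d|\le\alpha_k\to 0$ by Lemma~\ref{ctr_lem_lem1} together with~(\ref{ctr_eqn_mth1}). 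Your remark about ``enlarging $N$'' does not help either: that only moves eigenvalues from $\Phi_1$ into $\Phi_2$, strengthening rather than relaxing the transversality requirement. A complete argument must exploit the Frobenius block structure and the perturbative smallness of the off-diagonal pieces---in the spirit of the $Z_N+Q_N$ decomposition used in the proof of Theorem~\ref{ctr_thr_thm6}---to show that $\Phi_1\cup\Phi_2$ is a Riesz sequence in the vector-valued space $L_2(0,T;\mathbb{C}^r)$ for $T>T_0$; your sketch names the right ingredients but does not assemble them.
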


Now we prove the main result of this section.
\begin{thm}\label{ctr_thr_thm6}
Let the conditions {\rm (i)} and {\rm (ii)} of Theorem~\ref{ctr_thr_intr} hold for a system of the form~(\ref{ctr_eqn_1s}).
Then the system~(\ref{ctr_eqn_1s}) is controllable, and, moreover, the critical time of
controllability is $T_0=n_1$, where $n_1$ is the first controllability index of the pair $(A_{-1}, B)$.
\end{thm}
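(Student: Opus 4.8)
The plan is to establish the two assertions of Theorem~\ref{ctr_thr_intr} in the multivariable case — controllability for $T>n_1$ and non-controllability for $T\le n_1$ — by reducing the vectorial moment problem~(\ref{ctr_eqn_2.20b}) to a collection of scalar problems handled by Theorem~\ref{ctr_thr_thm3}, and then invoking Theorem~\ref{ctr_thr_cor} to pass from solvability of the infinite part of the moment problem to the equality ${\mathcal R}_T={\mathcal D}({\mathcal A})$. Since we may assume $(A_{-1},B)$ is in Frobenius normal form with $A_{-1}=\mathrm{diag}\{F_1,\dots,F_r\}$, $\dim F_i=s_i$, $\max_i s_i=n_1$, the index set $\{1,\dots,n\}$ splits into blocks $S_1,\dots,S_r$ with $|S_i|=s_i$, and the system~(\ref{ctr_eqn_2.20b}) has a ``triangular'' structure: the equations indexed by $m\in S_d$ contain the control $u_d$ with coefficient $q_{m,k}^d$, plus a tail $\sum_{d'\ne d}\int_0^T e^{\lambda_m^k t}q_{m,k}^{d'}u_{d'}(t)\,dt$.

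For the sufficiency part (controllability at $T>n_1$), I would argue as follows. First, conditions (i) and (ii) guarantee that the relevant coefficients $q_{m,k}^d$ do not all vanish in a way that would obstruct solvability; in the Frobenius form, condition (i) forces $q_{m,k}^d\ne 0$ precisely when $m$ is the ``last'' index of block $S_d$ (the one coupled to $g_i=(0,\dots,0,1)^T$), so for each $d$ there is a nonempty set $L_d\subseteq S_d$ of indices $m$ with $q_{m,k}^d$ bounded away from $0$. Since $\max_i s_i=n_1$, each $|L_d|\le n_1<T$, so by Theorem~\ref{ctr_thr_thm3} the families $\Phi_1^{(d)}=\{e^{\lambda_m^k t}q_{m,k}^d:\ |k|>N,\ m\in L_d\}$ each form a Riesz basis of the closure of their linear span in $L_2(0,T)$. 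I would then solve~(\ref{ctr_eqn_2.20b}) iteratively or simultaneously: use the diagonal blocks to determine each $u_d$ on the span controlled by $\Phi_1^{(d)}$, absorbing the off-diagonal tails — which by Lemmas~\ref{ctr_lem_lem2} and~\ref{ctr_lem_lem1} contribute only $\ell_2$-small perturbations after multiplication by $k$ — into the right-hand side. This shows the infinite part~(\ref{ctr_eqn_2.20}) is solvable for any $x_T$ satisfying (C1), whence Theorem~\ref{ctr_thr_cor} yields ${\mathcal R}_T={\mathcal D}({\mathcal A})$ for all $T>n_1$.

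For the necessity part (non-controllability at $T\le n_1$), I would isolate the block $i_0$ with $s_{i_0}=n_1$ and examine the $n_1$ equations of~(\ref{ctr_eqn_2.20b}) indexed by $m\in S_{i_0}$; after removing the $\ell_2$-summable corrections, these reduce to a scalar-type moment problem on exponentials $\{e^{\lambda_m^k t}\}_{m\in S_{i_0}}$ with $|S_{i_0}|=n_1$. Theorem~\ref{ctr_thr_thm3} (second assertion) then gives that at $T=n_1$ the closure of the linear span of $\Phi_1$ has codimension $(2N+1)n_1$ in $L_2(0,n_1)$, while the finite part contributes only $(2N+2)\cdot(\text{something})$ many extra functions — as in the proof of Theorem~\ref{ctr_thr_thm5}, not enough to fill the gap — so $\mathrm{codim}\,{\mathcal R}_T\ge 1$ and ${\mathcal R}_T\ne{\mathcal D}({\mathcal A})$; for $T<n_1$, Lemma~\ref{ctr_ut_prop4} upgrades this to infinite codimension. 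Finally, the statement about maximal delay $h$ follows by the trivial rescaling $t\mapsto t/h$ of the time axis.

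The main obstacle I anticipate is the simultaneous inversion of the coupled system~(\ref{ctr_eqn_2.20b}): one must show that the off-diagonal couplings genuinely do not destroy the Riesz-basis structure supplied blockwise by Theorem~\ref{ctr_thr_thm3}, i.e.\ that the ``triangular'' operator built from the diagonal Riesz bases plus compact (or $\ell_2$-bounded) off-diagonal corrections remains boundedly invertible on the relevant subspace — together with the bookkeeping needed to verify that condition (ii), via the Frobenius form, really does make each diagonal coefficient $q_{m,k}^d$ bounded below on a set $L_d$ of the right cardinality.
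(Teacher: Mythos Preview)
Your overall strategy --- split the infinite moment problem~(\ref{ctr_eqn_2.20b}) into a block-diagonal part handled by Theorem~\ref{ctr_thr_thm3} plus a small off-diagonal remainder, then invoke Theorem~\ref{ctr_thr_cor} --- is exactly the paper's. But there is a real gap in how you justify smallness of the off-diagonal tails, and a related misreading of what condition~(i) does. Your claim that condition~(i) forces $q_{m,k}^d\ne 0$ ``precisely when $m$ is the last index of block $S_d$'' is wrong on both counts: the $q_{m,k}^d=k\langle\mathbf b_d,\psi_{m,k}\rangle$ are built from eigenvectors of the \emph{perturbed} operator ${\mathcal A}^*$ and are generically all nonzero, while condition~(i) only guarantees that the whole vector $(q_{m,k}^1,\dots,q_{m,k}^r)$ is nonzero for each $(m,k)$; it is not used in the infinite part of the argument and enters only through Theorem~\ref{ctr_thr_cor}. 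More importantly, Lemmas~\ref{ctr_lem_lem2} and~\ref{ctr_lem_lem1} by themselves do not make the off-diagonal tails $\ell_2$-small: Lemma~\ref{ctr_lem_lem2} gives only uniform boundedness, and Lemma~\ref{ctr_lem_lem1} bounds only the \emph{difference} from the unperturbed quantity $e^{\widetilde\lambda_m^kt}\langle\mathbf b_d,\widetilde\psi_{m,k}\rangle$, not the quantity itself.

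The missing ingredient is the vanishing $\langle\mathbf b_d,\widetilde\psi_{m,k}\rangle_{M_2}=\langle b_d,c_m\rangle_{\mathbb C^n}=0$ for $m\in S_i$ and $d\ne i$, which is a direct consequence of the Frobenius block-diagonal form of $(A_{-1},B)$: the eigenvector associated with block $i$ is supported in that block, while $b_d$ is supported in block $d$. Only after inserting this zero does Lemma~\ref{ctr_lem_lem1} yield that the off-diagonal kernels have $L_2$-norm dominated by $\alpha_k$ with $\sum_k\alpha_k^2<\infty$; this is precisely the content of (\ref{ctr_eqn_mth1})--(\ref{ctr_eqn_mth2}) in the paper. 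The same Frobenius structure gives $\langle b_i,c_m\rangle\ne 0$ for \emph{every} $m\in S_i$, so the correct choice is $L_d=S_d$ (not a single ``last index''), with $|L_d|=s_d\le n_1$. The paper then makes your ``simultaneous inversion'' precise: write the problem as $(Z_N+Q_N)u=\{S_{m,k}\}$, use the vanishing above together with Lemma~\ref{ctr_lem_lem1} to obtain $\|Q_N\|\to 0$ as $N\to\infty$, establish a uniform-in-$N$ lower bound $\|Z_N^*x\|\ge\gamma\|x\|$ by writing $Z_N=PZ_{N_0}$ for a fixed $N_0$, and conclude surjectivity of $Z_N+Q_N$ for $N$ large. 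Your sketch of non-controllability at $T\le n_1$ is essentially what the paper does.
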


\begin{proof}
We assume that the pair $(A_{-1}, B)$ is in Frobenius normal form.
Then for all $i=1,\ldots, r$, $m\in S_i$, $d\not=i$ and for all $|k|>N$ the following relation holds:
\begin{equation}\label{ctr_eqn_mth1}
\left \langle {\mathbf b}_d, \widetilde{\psi}_{m,k} \right \rangle_{M_2}=\left \langle  b_d, c_{m} \right \rangle_{\mathbb{C}^n}=0,
\end{equation}
where $c_m$: $A_{-1}c_m=\mu_m c_m$. Thus for all $i=1,\ldots,r$ and $m\in S_i$ the following equality holds
\begin{equation}\label{ctr_eqn_mth2}
\sum\limits_{d\not=i}\int\limits_0^T {\rm e}^{\lambda_m^k t}q_{m,k}^{d} u_d(t){\;{\rm d}} t
= \sum\limits_{d\not=i}\int\limits_0^T k\left({\rm e}^{\lambda_m^k t} \left \langle {\mathbf b}_d, \psi_{m,k} \right \rangle_{M_2}
- {\rm e}^{\widetilde{\lambda}_m^k t} \left \langle {\mathbf b}_d,
\widetilde{\psi}_{m,k} \right \rangle_{M_2} \right) u_d(t){\;{\rm d}} t.
\end{equation}

For any $N\in\mathbb{N}$ the moment problem~(\ref{ctr_eqn_2.20b}) may be written in operator form
$$
\{S_{m,k}\}=Z_N u(\cdot)+Q_N u(\cdot),
$$
where
$\{S_{m,k}\}=\{k \left \langle x_T, \psi_{m,k}\right\rangle\}$
and the operators $Z_N, Q_N: L_2(0,T;\mathbb{C}^r)\rightarrow \ell_2$ are of the form
$$
\begin{array}{l}
Z_N u(\cdot)=\left\{\int\limits_0^T {\rm e}^{\lambda_m^k t}q_{m,k}^{i} u_i(t){\;{\rm d}} t,\; |k|>N \right\},\\
Q_N u(\cdot)=\left\{\sum\limits_{d\not=i}\int\limits_0^T k\left({\rm e}^{\lambda_m^k t}
\left \langle {\mathbf b}_d, \psi_{m,k} \right \rangle_{M_2}
- {\rm e}^{\widetilde{\lambda}_m^k t}  \langle {\mathbf b}_d, \widetilde{\psi}_{m,k}
\rangle_{M_2} \right) u_d(t){\;{\rm d}} t,\; |k|>N \right\}.
\end{array}
$$

Due to Theorem~\ref{ctr_thr_thm3}, for big enough $N$ and for $T\ge n_1$, the operator $Z_N$ is surjective, i.e.
its image of the space $L_2(0,T;\mathbb{C}^r)$, $T\ge n_1$,  is the whole space $\ell_2$.
From Lemma~\ref{ctr_lem_lem1}, it follows that for big enough $N$ the operator $Q_N$ is compact,
and, moreover, $\|Q_N\|\rightarrow 0$ when $N\rightarrow +\infty$.

Let us show that there exists $N_0\in\mathbb{N}$ such that for all $N>N_0$ one has:
$${\rm Im} [Z_N+Q_N]=\ell_2.$$
Since ${\rm Im} Z_N=\ell_2$, then there exists a constant $\gamma_N>0$
such that $\Vert Z_N^* x\Vert \ge \gamma_N\Vert x\Vert$ for all $x\in \ell_2$ (see, e.g. \cite[Theorem 4.13]{Rudin_1975}).
For $N>N_0$ we introduce the notation
$\ell_2^N=\{\{S_{m,k}\}_{\vert k\vert > N}:\; \sum_{\vert k\vert> N} \vert s_k\vert^2 <+ \infty\}$.
Thus, we have $Z_N=PZ_{N_0}$, where projectors $P: \ell_2^{N_0}\to \ell_2^{N}$ are defined as follows:
$$
 P(\{S_{m,k}\}_{\vert k\vert >N_0})=\{S_{m,k}\}_{\vert k\vert >N}.
$$
Therefore, $Z_N^*=Z_{N_0}^*P^*$ and $\Vert P^*x\Vert=\Vert x\Vert$, what gives
$$
 \Vert Z_N^* x\Vert =\Vert Z_{N_0}^*P^*x\Vert \ge \gamma_{N_0} \Vert x\Vert.
$$
The last means that for all $N>N_0$ and $x\in \ell_2$ the inequality $\Vert Z_N^* x\Vert \ge \gamma \Vert x\Vert$ holds,
where $\gamma=\gamma_0$.
Since $\Vert Q_N\Vert\to 0$ when $N\to +\infty$,
then choosing an appropriate $N$ we obtain estimate $\Vert Z_N-(Z_N+ Q_N)\Vert=\Vert Q_N\Vert \le \frac{\gamma}{2}$.
Thus
$$
  \Vert [Z_N^*+Q_N^*]x \Vert \ge \Vert Z_N^*x \Vert -\Vert Q_N^*x\Vert
  \ge \gamma \Vert x\Vert -\frac{\gamma}{2}\Vert x\Vert
  =\frac{\gamma}{2}\Vert x\Vert.
$$
Therefore, operator $Z_N+Q_N$ is surjective and its image equals to $\ell_2$.

Thus, the moment problem~(\ref{ctr_eqn_2.20b}) is solvable for $T\ge n_1$ and big enough $N\in\mathbb{N}$.
Applying Theorem~\ref{ctr_thr_cor}, we obtain that $R_T={\mathcal D}({\mathcal A})$ for $T>n_1$.

Arguing as in the proof of Theorem~\ref{ctr_thr_thm5}, we may show that the
codimension ${\mathcal {R}}_{n_1}$ in ${\mathcal D}({\mathcal A})$ is finite and no less than $n_1$,
what means that the system~(\ref{ctr_eqn_1s}) is uncontrollable at time $T=n_1$.
For $T<n_1$ the codimension of ${\mathcal {R}}_{T}$ in ${\mathcal D}({\mathcal A})$ is infinite.
\end{proof}

\section{Example}\label{sec:example}

Consider a three-dimensional ($n=3$) system given by the equation~(\ref{ctr_eqn_1s}) with the following coefficients:
$$
A_{-1}=\left(
\begin{array}{rrr}
-4 & 6 & -4 \\
0 & 2 & -2 \\
-3 & 3 & 2 \\
\end{array}
\right),
\qquad
B=\left(
\begin{array}{rr}
1 & 1 \\
1 & 0 \\
-1 & 1 \\
\end{array}
\right),
$$
and the matrices $A_2(\theta)$, $A_3(\theta)$ are such that  ${\rm rank} (\Delta_{\mathcal A} (\lambda)\;  B)=n$
for all $\lambda \in \mathbb{C}$.

We apply the change of control and state variables $u(t)=P\dot{z}(t-1)+v(t)$, $w=Cz$, where
$$
P=\left(
\begin{array}{rrr}
1 & -1 & 2 \\
3 & -2 & 3
\end{array}
\right),
\quad
C=\left(
\begin{array}{rrr}
1 & -1 & 1 \\
1 & 1 & 0 \\
-1 & 0 & 1
\end{array}
\right),
$$
and obtain the following system:
\begin{equation}\label{ctr_new_a-1}
\dot{w}(t) = \widehat{A}_{-1}\dot{w}(t-1)
+\int\nolimits_{-1}^0 \widehat{A}_2(\theta)\dot{w}(t+\theta)\;{\rm d}\theta
+\int\nolimits_{-1}^0 \widehat{A}_3(\theta){w}(t+\theta) \;{\rm d}\theta+\widehat{B}v,
\end{equation}
where $\widehat{A}_{-1}$ and $\widehat{B}$ are of the form
\begin{equation}\label{ctr_new_a-2}
\widehat{A}_{-1}=\left(
\begin{array}{rrr}
2 & 0 & 0 \\
0 & 0 & 1 \\
0 & 3 & 2 \\
\end{array}
\right),
\quad
\widehat{B}=\left(
\begin{array}{rr}
1 & 0 \\
0 & 0 \\
0 & 1 \\
\end{array}
\right)=(b_1, b_2).
\end{equation}

Let the operator $\mathcal A$ with eigenvalues $\lambda_m^k$
corresponds to the perturbed system (\ref{ctr_new_a-1})--(\ref{ctr_new_a-2}),
and the operator $\widetilde{\mathcal A}$ with eigenvalues $\widetilde{\lambda}_m^k$
corresponds to the system $\dot{w}(t) = \widehat{A}_{-1}\dot{w}(t-1)$.
Since the pair  $(\widehat{A}_{-1}, \widehat{B})$ is in Frobenius normal form, then
the eigenvectors $\widetilde{\psi}_{m,k}$ of the operator $\widetilde{\mathcal A}^*$ satisfy the relations
$$
\begin{array}{l}
\langle {\mathbf b}_1, \widetilde{\psi}_{m,k} \rangle = 0, \quad m=2,3\\
\langle {\mathbf b}_2, \widetilde{\psi}_{m,k} \rangle = 0, \quad m=1, \quad {\mathbf b}_i=(b_i,0)\in M_2.
\end{array}
$$
Since $q^d_{m,k}=k\langle {\mathbf b}_d, \psi_{m,k}\rangle$, where  $\psi_{m,k}$
are eigenvectors of the operator $\mathcal A^*$, then
infinite part of the moment problem (\ref{ctr_eqn_2.20}) reads as
$$
\begin{array}{l}
k \left \langle x_T,\; \psi_{1,k}\right\rangle
=\int\limits_0^T {\rm e}^{\lambda_1^k t} q^1_{1,k} u_1(t){\;{\rm d}} t
+ \int\limits_0^T f^2_{1,k} u_2(t){\;{\rm d}} t,\\
k \left \langle x_T,\; \psi_{2,k}\right\rangle
=\int\limits_0^T {\rm e}^{\lambda_2^k t} q^2_{2,k} u_2(t){\;{\rm d}} t
+ \int\limits_0^T f^2_{2,k} u_1(t){\;{\rm d}} t,\\
k \left \langle x_T,\; \psi_{3,k}\right\rangle
=\int\limits_0^T {\rm e}^{\lambda_3^k t} q^2_{3,k} u_2(t){\;{\rm d}} t
+ \int\limits_0^T f^2_{3,k} u_1(t){\;{\rm d}} t, \quad  |k|>N,
\end{array}
$$
where the functions $f^d_{m,k}$ are of the form
$$
f^d_{m,k}=k\left({\rm e}^{\lambda_m^k t} \langle {\mathbf b}_d, \psi_{m,k}\rangle -
{\rm e}^{\widetilde{\lambda}_m^k t} \langle {\mathbf b}_d, \widetilde{\psi}_{m,k}\rangle \right)
$$
and due to Lemma~\ref{ctr_lem_lem1} satisfy the estimate
$|f^d_{m,k}|\le \alpha_k$, $\sum_{k}\alpha_k^2<+\infty$.

The first controllability index $n_1$ of the pair $(\widehat{A}_{-1},\; \widehat{B})$ (or $(A_{-1},\; B)$)
equals to $2$. The conditions {\rm (i)} and {\rm (ii)} of Theorem~\ref{ctr_thr_intr} are satisfied, and, thus,
the system is controllable with the critical controllability time $T_0=2$.

\subsection*{Conclusion}
A new approach to the problem of the exact controllability by the moment problem method is proposed.
The difficulty of the choice of basis is contounred by a change of control and phase coordinates
what allows to give a more direct proof of the criterium of exact controllability.
The proposed approach offers a new challenge for controllability and stabilizability problems
for more general class of systems with neutral operator of the form $Kf=\sum_{i=1}^r A_{h_i}f(h_i)$, $h_i\in [-1,0]$.

\subsection*{Acknowledgements}
This research was partially supported by PROMEP (Mexico) via "Poyecto de Redes" and
Polish Nat. Sci. Center, grant No~514~238~438.

\def\cprime{$'$}

\end{document}